\providecommand{\U}[1]{\protect\rule{.1in}{.1in}}
\newtheorem{theorem}{Theorem}
\newtheorem{corollary}[theorem]{Corollary}
\newtheorem{definition}[theorem]{Definition}
\newtheorem{example}[theorem]{Example}
\newtheorem{lemma}[theorem]{Lemma}
\newtheorem{proposition}[theorem]{Proposition}
\newtheorem{remark}[theorem]{Remark}
\newenvironment{proof}[1][Proof]{\noindent\textbf{#1.} }{\ \rule{0.5em}{0.5em}}
\begin{document}

\title{Factorizing a Finite Group into Conjugates of a Subgroup}
\author{Martino Garonzi\\Department of Mathematics\\University of Padova\\Via Trieste 63\\35121 Padova\\Italy
\and Dan Levy\\The School of Computer Sciences \\The Academic College of Tel-Aviv-Yaffo \\2 Rabenu Yeruham St.\\Tel-Aviv 61083\\Israel}
\maketitle

\begin{abstract}
For every non-nilpotent finite group $G$, there exists at least one proper
subgroup $M$ such that $G$ is the setwise product of a finite number of
conjugates of $M$. We define $\gamma_{\text{cp}}\left(  G\right)  $ to be the
smallest number $k$ such that $G$ is a product, in some order, of $k$ pairwise
conjugated proper subgroups of $G$. We prove that if $G$ is non-solvable then
$\gamma_{\text{cp}}\left(  G\right)  \leq36$ while if $G$ is solvable then
$\gamma_{\text{cp}}\left(  G\right)  $ can attain any integer value bigger
than $2$, while, on the other hand, $\gamma_{\text{cp}}\left(  G\right)
\leq4\log_{2}\left\vert G\right\vert $.

\end{abstract}

\section{ Introduction}

In this paper we consider representations of a finite group\footnote{Unless
otherwise stated, all our groups are assumed to be finite.} as a product of
conjugates of a single proper subgroup. This problem belongs to the broader
class of covering problems. By a covering of a finite group $G$ we mean a
collection of proper subsets of $G$, whose union or setwise product is $G$.
The covering operation (union or product) is fixed from the start, and in the
case that the covering operation is setwise product there may be restrictions
on the ordering of the subsets and their repetitions. Questions of interest
besides the mere existence of coverings of a specified type, include the
possible sizes of the coverings, and in particular, exact values or bounds on
minimal sizes. Several problems of this type are considered in the literature:
Union coverings by (conjugacy classes of) proper subgroups (for Union
coverings see \cite{tom}, \cite{lucdet}, \cite{garluc}, for Normal Union
coverings see \cite{bps}, \cite{mb}, \cite{gamma}), product coverings by
conjugacy classes (\cite{AradHerzog}), factorizing groups as a product of two
subgroups (\cite{LiebeckPraegerSaxl}), and other problems.

\begin{definition}
Let $G$ be a group. A \textit{conjugate product covering of }$G$ is a sequence
$\left(  A_{1},...,A_{k}\right)  $ of $k\geq2$ proper subgroups of $G$ such
that any two of the $A_{i}$ are conjugate in $G$ and $G=$ $A_{1}\cdots A_{k}$.
\end{definition}

Since a group $G$ is nilpotent if and only if every maximal subgroup of $G$ is
normal, a conjugate product covering of $G$ exists if and only if $G$ is non-nilpotent.

\begin{definition}
Let $G$ be a finite group. Define $\gamma_{\text{cp}}\left(  G\right)  $ to be
the minimal integer $k$ such that $G$ is a product of $k$ conjugates of a
proper subgroup of $G$ if $G$ is non-nilpotent, and $\gamma_{\text{cp}}\left(
G\right)  =\infty$ if $G$ is nilpotent (as usual $n<\infty$ for any natural
number $n$, and $\infty\leq\infty$).
\end{definition}

We remark that Liebeck, Nikolov and Shalev (\cite{LiebeckNikolovShalev1}%
,\cite{LiebeckNikolovShalev2}) also consider conjugate product coverings,
however, their discussion is limited from the outset to finite simple groups,
and concentrates on bounding the size of specific coverings in terms of the
orders of both the group and the covering subgroup.

Note (Lemma \ref{Lem_G=AB} below) that $\gamma_{\text{cp}}\left(  G\right)
>2$ for any group $G$ . For non-solvable groups our main result is the
existence of a universal constant bound on $\gamma_{\text{cp}}$.

\begin{theorem}
\label{Th_NScase_gamma<=36}Let $G$ be a non-solvable group. Then
$\gamma_{\text{cp}}\left(  G\right)  \leq36$.
\end{theorem}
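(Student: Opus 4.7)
The strategy is to reduce the problem to the almost-simple setting via the solvable radical and a minimal normal subgroup, and then invoke CFSG-based results on products of conjugates in non-abelian simple groups.

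\emph{Step 1 (Reduction modulo the solvable radical).} Let $R=R(G)$ be the solvable radical of $G$. If $(\bar{A}_1,\dots,\bar{A}_k)$ is a conjugate product covering of $\bar{G}:=G/R$ with witness $\bar M = M/R$, then the componentwise preimages $A_i := \pi^{-1}(\bar A_i)$ under the quotient map $\pi:G\to G/R$ are pairwise conjugate proper subgroups of $G$ whose product equals $G$. Hence I may assume $R(G)=1$.

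\emph{Step 2 (Reducing to a minimal normal subgroup).} With $R(G)=1$, every minimal normal subgroup $N$ of $G$ is isomorphic to $S^n$ for some non-abelian simple $S$, with $G$ permuting the $n$ copies transitively. Because $N$ is non-nilpotent, $N\not\le\Phi(G)$, so some maximal subgroup $M$ of $G$ satisfies $N\not\le M$; maximality then forces $G=MN$. Writing $H:=M\cap N$ and using $H^{g_i}\le M^{g_i}$ for $g_i\in G$, any factorization $N=H^{g_1}\cdots H^{g_j}$ yields
\[
G = NM = H^{g_1}\cdots H^{g_j}\cdot M \subseteq M^{g_1}\cdots M^{g_j}\cdot M,
\]
so $\gamma_{\text{cp}}(G)\le j+1$. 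It thus suffices to bound $j$ uniformly.

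\emph{Step 3 (Covering the socle layer).} This is the technical core. The intersection $H$ is a subdirect product inside $S^n$ whose shape is controlled by the transitive $G$-action on the simple factors. I would combine (a) a uniform CFSG-based bound, in the spirit of Liebeck--Nikolov--Shalev, on the number of conjugates of a proper subgroup needed to write a single non-abelian simple group as a product, with (b) a diagonal/permutational argument harnessing the transitive action of $G$ on the $n$ simple factors of $N$ to pass from the $n=1$ bound to a uniform bound valid for all $n$.

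\emph{Main obstacle.} The hard part is securing the explicit constant $36$ across all isomorphism types of $S$ and all admissible shapes of $H$. Generic factorization arguments should handle alternating, classical and exceptional groups of sufficiently large rank, but small cases — in particular those whose simple section is $A_5$, $L_2(q)$ for small $q$, or a small sporadic group — will likely require separate, more combinatorial verification. The value $36$ is presumably the sum of the worst-case covering bound produced in Step 3 and the small additive overhead from Steps 1 and 2.
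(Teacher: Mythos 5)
Your Steps 1 and 2 follow the same broad reduction as the paper (the lifting property, Proposition \ref{Prop_Lifting}, followed by passage to a minimal normal subgroup $N=T^{m}$), but there are two genuine gaps. First, in Step 2 you take an \emph{arbitrary} maximal subgroup $M$ with $G=MN$ and set $H=M\cap N$. This can fail outright: $M\cap N$ may be trivial (for primitive groups of twisted wreath type the point stabilizer meets the regular non-abelian socle trivially), and even when nontrivial it may be a product of full diagonal subgroups of $T^{m}$, for which factorizing $N$ into a \emph{uniformly} bounded number of conjugates is not supplied by anything in your outline. The paper avoids this by never using a maximal supplement: it chooses a specific proper nontrivial subgroup $U\cap T$ of one simple factor (working in the almost simple group $X=N_{G}\left(  T_{1}\right)  /C_{G}\left(  T_{1}\right)  $ with $UT=X$), forms the coordinatewise power $\left(  U\cap T\right)  ^{m}$, and uses $N_{G}\bigl(\left(  U\cap T\right)  ^{m}\bigr)$ as the covering subgroup; the identity $T^{m}=\bigl(\left(  U_{1}\cap T\right)  \cdots\left(  U_{h}\cap T\right)  \bigr)^{m}=\left(  U_{1}\cap T\right)  ^{m}\cdots\left(  U_{h}\cap T\right)  ^{m}$ is what makes the passage from one simple factor to all $m$ of them immediate (Lemmas \ref{Lem_VT=X} and \ref{Lem_LiftingToNormalizers}). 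This is precisely the ``diagonal/permutational argument'' you defer to Step 3(b), and it works only because the subgroup is chosen to be of product form.

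Second, Step 3(a) is the technical core and you do not carry it out; moreover the results you point to do not suffice as cited. The Liebeck--Nikolov--Shalev bounds are of the shape $c\log\left\vert T\right\vert /\log\left\vert H\right\vert $, which is unbounded over all proper subgroups $H$, so no uniform constant follows without a careful choice of $H$ in each family. The paper obtains the constant by: (i) for $T\cong A_{n}$, a $2$-transitive action has rank $2$, so three conjugates of a point stabilizer suffice (Proposition \ref{Prop_gammaBoundedByRank}); (ii) for $T$ of Lie type in characteristic $p$, the Liebeck--Pyber theorem that $T$ is a product of $25$ Sylow $p$-subgroups, applied with $U=N_{X}\left(  P\right)  $; (iii) for sporadic and Tits groups, explicit rank computations, the worst case being $\mathrm{Aut}\left(  O^{\prime}N\right)  $ with $U=J_{1}\times2$ and $r+1=36$. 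In particular your guess about the provenance of $36$ is off: it is $r+1$ for that single sporadic case, not a covering bound plus additive overhead from the reduction --- the reduction costs nothing, since the last conjugate in the product already supplements $N$ (cf.\ Lemma \ref{Lem_HA_1...A_k}).
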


In fact, we believe that $36$ is not the best possible bound (see Remark
\ref{Rem_Improving36}). On the other hand, for solvable groups we have:

\begin{theorem}
\label{Th_gamma=kForAllk>=3}For any integer $n\geq3$ there \ exists a solvable
group $G$ such that $\gamma_{\text{cp}}\left(  G\right)  =n$.
\end{theorem}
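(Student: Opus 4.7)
The plan is to construct, for every integer $n\geq 3$, a solvable group $G_n$ with $\gamma_{\text{cp}}(G_n)=n$. I would take $G_n := D_{2p}$, the dihedral group of order $2p$, where $p$ is an odd prime satisfying $2^{n-2}<p\leq 2^{n-1}$; by Bertrand's postulate such a $p$ exists for every $n\geq 3$.

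The first step would be to classify the proper subgroups of $G_n=\mathbb{Z}/p\rtimes\langle\sigma\rangle$. Since $p$ is prime, these are $\{1\}$, the rotation subgroup $\mathbb{Z}/p$ (which is normal, hence of no use for a conjugate product covering), and the $p$ reflection subgroups of order $2$, all mutually conjugate. Therefore every conjugate product covering of $G_n$ must use $M=\langle\sigma\rangle\cong C_2$. For the lower bound, since $|M^{g_1}\cdots M^{g_k}|\leq|M|^k=2^k$, the covering equality $M^{g_1}\cdots M^{g_k}=G_n$ forces $2^k\geq 2p$, i.e.\ $k\geq\lceil 1+\log_2 p\rceil=n$ by the choice of $p$; hence $\gamma_{\text{cp}}(G_n)\geq n$.

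For the upper bound, I would explicitly compute a product $M^{v_1}\cdots M^{v_n}$ of $n$ conjugates, indexed by $v_1,\ldots,v_n\in\mathbb{Z}/p$. Using $\sigma v\sigma^{-1}=-v$, a direct calculation shows that every element of this product has the form $u\sigma^{\epsilon}$, where $u\in\mathbb{Z}/p$ is (up to a fixed nonzero scalar) an alternating subset sum $A_S=\sum_j(-1)^{j-1}v_{i_j}$ over an ordered subset $S=\{i_1<\cdots<i_m\}\subseteq\{1,\ldots,n\}$, and $\epsilon\equiv m\pmod 2$. Thus the covering $M^{v_1}\cdots M^{v_n}=G_n$ reduces to a combinatorial question: can $v_1,\ldots,v_n\in\mathbb{Z}/p$ be chosen so that both the even-size and the odd-size alternating subset sums cover all of $\mathbb{Z}/p$?

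The main obstacle is this combinatorial existence claim. Since there are $2^{n-1}\geq p$ alternating sums of each parity, the double cover is information-theoretically feasible, but it is not immediate how to secure it with only $n$ degrees of freedom. I would first attempt an explicit construction such as $v_i=2^{i-1}\bmod p$ (readily verified in small cases) and otherwise fall back on a counting or probabilistic argument, possibly choosing $p$ near the lower end of the interval $(2^{n-2},2^{n-1}]$ where the average number of alternating sums per residue is largest. Once the combinatorial claim is established, combining it with the lower bound yields $\gamma_{\text{cp}}(G_n)=n$.
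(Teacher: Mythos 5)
Your construction is the same as the paper's: $G=D_{2p}$ with $2^{n-2}<p<2^{n-1}$ chosen via Bertrand's postulate, the lower bound obtained by comparing $|M|^{k}$ with $|G|$, and the upper bound sought as a product of conjugates of the order-$2$ subgroup whose rotation parts are alternating subset sums of the conjugating elements. However, there is a genuine gap exactly where you flag ``the main obstacle'': you never establish that $v_{1},\dots,v_{n}$ can be chosen so that the alternating subset sums cover $\mathbb{Z}/p$. This is the combinatorial heart of the whole argument and it cannot be deferred. Your fallback of a counting or probabilistic argument is hopeless here because there is no slack: there are only $2^{n-1}$ alternating sums of each parity while $p$ can be as large as $2^{n-1}-1$ (e.g.\ a Mersenne prime), so the map from subsets to residues must be essentially injective, and a generic or random choice of the $v_{i}$ will not achieve that. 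Your first guess $v_{i}=2^{i-1}$ is in fact the paper's choice (Proposition \ref{Prop_D_2p}), but verifying that it works requires a real proof, supplied by Lemma \ref{Lem_X_n}: the $2^{n}-1$ alternating sums $\sum_{i}(-1)^{i}2^{a_{i}}$ over increasing sequences $a_{0}<\dots<a_{h}\leq n-1$ are pairwise distinct (shown by induction using divisibility by powers of $2$) and constitute exactly the set $\left\{ -2^{n-1}+1,\dots,2^{n-1}\right\} \setminus\left\{ 0\right\} $, hence reduce modulo $p$ onto all nonzero residues whenever $p-1<2^{n}$. Until you prove this or an equivalent statement, the upper bound $\gamma_{\text{cp}}(G)\leq n$ is not established, and the theorem does not follow.

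A secondary point: you set yourself a harder target than necessary by demanding that both the even-size and the odd-size alternating sums separately cover $\mathbb{Z}/p$. The paper sidesteps this by taking the product $\langle b\rangle^{v_{1}}\cdots\langle b\rangle^{v_{n-1}}\langle b\rangle$ and showing only that it contains the rotation subgroup $V$; since $V\langle b\rangle=G$ and the factors are conjugates of $\langle b\rangle$, Lemma \ref{Lem_HA_1...A_k} upgrades containment of $V$ to equality with $G$. Adopting this device halves the combinatorial burden and lets sums of both parities contribute to covering $V$ alone.
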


\begin{theorem}
\label{Th_SolvableCaseUpperBound}Let $G$ be a finite solvable group. Then
$\gamma_{\text{cp}}\left(  G\right)  \leq4\log_{2}\left\vert G\right\vert $.
\end{theorem}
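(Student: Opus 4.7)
The plan is strong induction on $|G|$, restricted to non-nilpotent solvable groups (the statement is vacuous when $G$ is nilpotent, where $\gamma_{\mathrm{cp}}(G)=\infty$). Let $N$ be a minimal normal subgroup of $G$. \emph{Reduction.} If $G/N$ is non-nilpotent, I apply the inductive hypothesis to $G/N$ to obtain a conjugate-product covering $(\bar{A}_1,\ldots,\bar{A}_k)$ with $k\le 4\log_2|G/N|$. The preimages $A_i=\pi^{-1}(\bar A_i)\le G$ under the quotient $\pi\colon G\to G/N$ are pairwise conjugate in $G$ (conjugation commutes with $\pi$), and since each $A_i$ contains $\ker\pi=N$ one has $A_1\cdots A_k=\pi^{-1}(\bar{A}_1\cdots\bar{A}_k)=G$. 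Hence $\gamma_{\mathrm{cp}}(G)\le 4\log_2|G/N|<4\log_2|G|$.

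\emph{Structural base case.} Otherwise $G/N$ is nilpotent for every minimal normal $N$. Then $G$ has a unique minimal normal $N$: two distinct minimal normals would intersect trivially and embed $G$ in a product of nilpotent groups, contradicting non-nilpotence. Pick a non-normal maximal subgroup $M$ of $G$ (which exists because $G$ is non-nilpotent). Since maximal subgroups of the nilpotent quotient $G/N$ are normal, $M$ does not contain $N$; maximality gives $MN=G$, and $M\cap N$ is a proper $G$-invariant subgroup of $N$, hence trivial by minimality. So $G=M\ltimes N$ with $N$ elementary abelian, $M\cong G/N$ nilpotent, and $M$ acting faithfully and irreducibly on $N$ (faithfulness because $C_M(N)$ is a normal subgroup of $G$ meeting $N$ trivially, hence trivial by uniqueness of the minimal normal).

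\emph{Combinatorial core and main obstacle.} Using $N$ abelian and $M\cap N=1$, a direct manipulation yields $M\cdot M^{n_1}\cdots M^{n_k}=M\cdot T_k$, where $T_k\subseteq N$ is defined by $T_0=\{0\}$ and the recursion $T_{k+1}=[M,n_{k+1}]+M\cdot T_k$ (written additively in $N$; $M\cdot T_k$ denotes the union of $M$-orbits of elements of $T_k$). The product covers $G$ iff $T_k=N$. I would choose $v\in N\setminus\{0\}$ with small centralizer $C_M(v)$; then $[M,v]=\{(m-1)\cdot v:m\in M\}$ has size $|M|/|C_M(v)|$ and its $\mathbb{F}_p$-span is all of $N$ by irreducibility, and an iterated sum-set / doubling argument in the elementary abelian $p$-group $N$, exploiting the $M$-invariance of each $M\cdot T_k$, grows $|T_k|$ by a bounded factor per step until $T_k=N$. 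The main obstacle is this last estimate: the naive approach --- spanning $N=\mathbb{F}_p^d$ by iterated sums of a subset containing a basis --- requires up to $(p-1)d$ copies, which can exceed $4\log_2|N|$ for large primes $p$. Closing the gap forces one either to exploit the $\mathbb{F}_p[M]$-module structure (the $M$-invariance of the accumulated summands guarantees genuine doubling), or to use the lower bound on $|M|$ coming from faithful irreducible action so that the extra slack in $|G|=|M|\cdot|N|$ absorbs the $p$-dependent overhead into the $4\log_2|G|$ budget.
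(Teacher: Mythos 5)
Your reduction to the case where $G/N$ is nilpotent for every minimal normal subgroup $N$, and the resulting structure $G=M\ltimes N$ with $N$ elementary abelian minimal normal and $M$ nilpotent acting faithfully and irreducibly, matches the paper's reduction to quotient minimal non-nilpotent groups and its ``minimal solvable setting'' ($G=V\rtimes K$). But the heart of the theorem is precisely the estimate you defer: showing that the number of conjugates needed is $O(\log_2|G|)$ rather than something depending linearly on $p$. You state the obstacle honestly and sketch two possible strategies (exploit the $\mathbb{F}_p[M]$-module structure, or absorb the overhead into the slack from $|M|$) without carrying out either, and the unjustified claim that the sumset recursion ``grows $|T_k|$ by a bounded factor per step'' is exactly the assertion that needs proof. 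As written, this is a genuine gap, not a routine verification.

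The paper closes the gap with two ingredients you do not have. First (Lemma \ref{Lem_vinKK^t}): since $K$ is nilpotent and nontrivial, pick $1\neq x\in Z(K)$; the set $C(x^{-1},V)=\{v^x-v\mid v\in V\}$ is a subgroup of $V$ normalized by $V$ (abelian) and by $K$ (because $x$ is \emph{central} in $K$), hence normal in $G$, hence equal to $V$ (it cannot be trivial, as $Z(G)=1$ in this setting). Therefore \emph{every} element $v\in V$ can be written as $w^x-w=x^{-1}x^{w^{-1}}\in KK^{w^{-1}}$, i.e.\ each single element of $V$ costs only two conjugates of $K$. This is much stronger than your observation that $[M,v]$ spans $N$; note that it uses nilpotency of $M$ essentially (a nontrivial central element), which your sketch never exploits. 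Second, the binary-expansion trick: with a basis $v_1,\dots,v_n$ of $V$, every $sv_i$ ($0\le s\le p-1$) is a sum of elements $2^jv_i$ with $0\le j\le \lceil\log_2 p\rceil-1$, and each $2^jv_i$ lies in some $KK^{t_{ij}}$; taking the product of all these pairs (choosing the identity from a factor when the corresponding bit is $0$) covers $V$, and then $KV=G$ finishes via Lemma \ref{Lem_HA_1...A_k}. This gives $\gamma_{\text{cp}}(G)\le 2n\lceil\log_2 p\rceil\le 2n(\log_2 p+1)\le 4\log_2|G|$, circumventing the $(p-1)d$ blow-up you were worried about. Without these two steps your argument does not yield the stated bound.
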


The rest of the paper is organized as follows. In Section \ref{Section_QMNNG}
we collect some general results about $\gamma_{\text{cp}}$, and identify a
class of groups which we term quotient minimal non-nilpotent groups, on which
$\gamma_{\text{cp}}$ is maximal in a sense to be made precise. In sections
\ref{Section_NonSolvable} and \ref{Section_Solvable},\ we apply these general
results to proving Theorem \ref{Th_NScase_gamma<=36} and Theorems
\ref{Th_gamma=kForAllk>=3} and \ref{Th_SolvableCaseUpperBound} respectively,
as well as additional results and examples.

\textbf{Notation}. We use fairly standard notation. In particular,
$\mathbb{N}$ and $\mathbb{N}_{0}$ denote the positive and the non-negative
integers respectively, $\wr$ stands for wreath product, $\rtimes$ denotes
semi-direct product, $\Phi\left(  G\right)  $ and $F\left(  G\right)  $ are
the Frattini and Fitting subgroups of $G$, $T^{m}=\underset{m\text{ direct
factors}}{\underbrace{T\times...\times T}}$, and for $x$ real, $\left\lceil
x\right\rceil $ is the smallest integer satisfying $\left\lceil x\right\rceil
\geq x$.

\section{Quotient Minimal non-Nilpotent Groups\label{Section_QMNNG}}

The following lemma is a basic well-known result.

\begin{lemma}
\label{Lem_G=AB}Suppose that $G=AB$ for some subgroups $A$ and $B$ then
$G=A^{g_{1}}B^{g_{2}}$ for any $g_{1},g_{2}\in G$. In particular
$\gamma_{\text{cp}}\left(  G\right)  >2$ for every group $G$.
\end{lemma}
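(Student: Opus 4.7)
The plan is to reduce the assertion to a slightly stronger intermediate fact, namely that whenever $G = AB$, one also has $G = AhB$ for every $h \in G$. This is a one-line computation: since $h \in G = AB$, write $h = a_{0} b_{0}$ with $a_{0} \in A$ and $b_{0} \in B$; then $AhB = A a_{0} b_{0} B = AB = G$, using $A a_{0} = A$ and $b_{0} B = B$.

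Given the intermediate fact, the main statement follows by pure conjugation. One checks directly that $A^{g_{1}} B^{g_{2}} = g_{1}^{-1}(A (g_{1} g_{2}^{-1}) B) g_{2}$, and that $g_{1}^{-1} G g_{2} = G$ since left and right multiplication by elements of $G$ preserve $G$. Applying the intermediate fact with $h = g_{1} g_{2}^{-1}$ yields $A (g_{1} g_{2}^{-1}) B = G$, and hence $A^{g_{1}} B^{g_{2}} = G$.

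For the ``in particular'' conclusion, the nilpotent case is vacuous since $\gamma_{\text{cp}}(G) = \infty$. Otherwise, if $\gamma_{\text{cp}}(G) = 2$, then there would exist a proper subgroup $A$ and an element $g \in G$ with $G = A \cdot A^{g}$. Setting $B = A^{g}$, $g_{1} = 1$, and $g_{2} = g^{-1}$ in the first part gives $G = A \cdot (A^{g})^{g^{-1}} = A \cdot A = A$, contradicting the properness of $A$. No step in the argument poses a serious obstacle; the whole lemma is a short piece of formal manipulation of products of subsets, with the only mild subtlety being the reformulation of $A^{g_1}B^{g_2}=G$ as the translated identity $A(g_1g_2^{-1})B=G$.
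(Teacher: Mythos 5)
Your proof is correct; the paper states this lemma as a basic well-known result and omits the proof entirely, and your argument (reducing to $G=AhB$ for all $h$, then conjugating, then applying it with $B=A^{g}$ to rule out $\gamma_{\text{cp}}(G)=2$) is exactly the standard verification the authors are implicitly relying on. All the individual manipulations check out, including the identity $A^{g_1}B^{g_2}=g_1^{-1}\bigl(A(g_1g_2^{-1})B\bigr)g_2$ and the final contradiction $G=A\cdot A=A$.
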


The next lemma is an immediate useful consequence of Lemma \ref{Lem_G=AB}.

\begin{lemma}
\label{Lem_HA_1...A_k}Let $H,A_{1},...,A_{k}\leq G$.

1. If $HA_{k}=G$ and $H\subseteq A_{1}\cdots A_{k}$ then $A_{1}\cdots A_{k}=G$.

2. If each of $A_{1},...,A_{k}$ is conjugate to $A\leq G$, $HA=G$ and
$H\subseteq A_{1}\cdots A_{k}$ then $A_{1}\cdots A_{k}=G$.
\end{lemma}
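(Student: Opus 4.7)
The plan is to prove part 1 directly from the definitions and then deduce part 2 as a one-line consequence of part 1 and Lemma \ref{Lem_G=AB}.

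For part 1, I would take an arbitrary element $g\in G$ and exploit the factorization $G=HA_{k}$ to write $g=ha_{k}$ with $h\in H$ and $a_{k}\in A_{k}$. The hypothesis $H\subseteq A_{1}\cdots A_{k}$ lets me expand $h$ as $h=a_{1}a_{2}\cdots a_{k-1}a_{k}'$ with $a_{i}\in A_{i}$. Substituting and using that $A_{k}$ is a subgroup (so $a_{k}'a_{k}\in A_{k}$) gives
\[
g=a_{1}a_{2}\cdots a_{k-1}\bigl(a_{k}'a_{k}\bigr)\in A_{1}\cdots A_{k}.
\]
Hence $G\subseteq A_{1}\cdots A_{k}$, and the reverse inclusion is automatic, so $G=A_{1}\cdots A_{k}$.

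For part 2, the reduction to part 1 is the only idea needed. Since $A_{k}$ is a conjugate of $A$, write $A_{k}=A^{g}$ for some $g\in G$. Applying Lemma \ref{Lem_G=AB} to the factorization $G=HA$ with $g_{1}=1$ and $g_{2}=g$ yields $G=HA^{g}=HA_{k}$. Combined with the hypothesis $H\subseteq A_{1}\cdots A_{k}$, part 1 immediately delivers $A_{1}\cdots A_{k}=G$.

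There is no real obstacle in either part; the only point requiring any care is noticing that one must collapse the last two factors $a_{k}'$ and $a_{k}$ using that $A_{k}$ is closed under multiplication, which is what makes the argument work even though the middle factors of the product $A_{1}\cdots A_{k}$ are not assumed to be subsets of each other in any way. The role of Lemma \ref{Lem_G=AB} in part 2 is purely to move the factorization hypothesis from the fixed subgroup $A$ to the specific conjugate $A_{k}$ that appears at the right end of the product.
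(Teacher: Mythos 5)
Your proof is correct and is essentially the paper's argument: part 1 is the elementwise rendering of the one-line set computation $G=HA_{k}\subseteq\left(A_{1}\cdots A_{k}\right)A_{k}=A_{1}\cdots A_{k}$ (using $A_{k}A_{k}=A_{k}$), and part 2 is deduced from part 1 via Lemma \ref{Lem_G=AB} exactly as in the paper.
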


\begin{proof}
For (1) we have $G=HA_{k}\subseteq\left(  A_{1}\cdots A_{k}\right)
A_{k}=A_{1}\cdots A_{k}$, and (2) follows from (1) and Lemma \ref{Lem_G=AB}.
\end{proof}

The following is a key property for evaluating $\gamma_{\text{cp}}$.

\begin{proposition}
\label{Prop_Lifting}Let $G$ be a finite group. Then $\gamma_{\text{cp}}\left(
G\right)  \leq\gamma_{\text{cp}}\left(  G/N\right)  $ for every
$N\trianglelefteq G$. We shall call this "the lifting property".
\end{proposition}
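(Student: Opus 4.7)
The plan is to lift a conjugate product covering of $G/N$ to one of $G$ of the same length, using the correspondence theorem together with the fact that subgroups containing $N$ behave well under conjugation from $G$.

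First dispose of the trivial case: if $G/N$ is nilpotent then $\gamma_{\text{cp}}(G/N)=\infty$ and there is nothing to prove. So assume $k:=\gamma_{\text{cp}}(G/N)$ is finite, and fix a conjugate product covering $(\bar{M}_{1},\ldots ,\bar{M}_{k})$ of $G/N$ with all $\bar{M}_{i}$ conjugate in $G/N$ to a single proper subgroup $\bar{M}<G/N$. Let $\pi :G\rightarrow G/N$ be the canonical projection, and set $M:=\pi^{-1}(\bar{M})$ and $M_{i}:=\pi^{-1}(\bar{M}_{i})$. By the correspondence theorem, $M$ and each $M_{i}$ are subgroups of $G$ containing $N$, and $M$ is proper in $G$ because $\bar{M}$ is proper in $G/N$.

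Next I would check that the $M_{i}$'s are $G$-conjugate to $M$. Write $\bar{M}_{i}=\bar{g}_{i}\bar{M}\bar{g}_{i}^{-1}$ and choose lifts $g_{i}\in G$ with $\pi (g_{i})=\bar{g}_{i}$. Since $N\leq M$, the preimage of $\bar{g}_{i}\bar{M}\bar{g}_{i}^{-1}$ under $\pi$ equals $g_{i}Mg_{i}^{-1}$, hence $M_{i}=g_{i}Mg_{i}^{-1}$.

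Finally, I would show $M_{1}\cdots M_{k}=G$. Applying $\pi$ to the product and using $\pi (M_{i})=\bar{M}_{i}$ gives $\pi (M_{1}\cdots M_{k})=\bar{M}_{1}\cdots \bar{M}_{k}=G/N$. On the other hand, $M_{1}\cdots M_{k}\supseteq M_{1}\supseteq N=\ker \pi $, so the product is a full union of $N$-cosets. Combining these two facts forces $M_{1}\cdots M_{k}=G$. This exhibits a conjugate product covering of $G$ of length $k$, so $\gamma_{\text{cp}}(G)\leq k=\gamma_{\text{cp}}(G/N)$.

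There is no real obstacle here; the only point requiring care is the verification that lifting conjugators in $G/N$ to conjugators in $G$ produces precisely the preimages $M_{i}=\pi^{-1}(\bar{M}_{i})$, which relies on the hypothesis $N\leq M$ so that the preimage of a conjugate is the conjugate of the preimage.
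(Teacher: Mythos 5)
Your proof is correct and follows exactly the paper's approach: assume $G/N$ is non-nilpotent, pull back a minimal conjugate product covering of $G/N$ through the canonical projection, and use the correspondence theorem to verify that the preimages are proper, pairwise conjugate, and multiply out to $G$. The paper states this tersely ("Using the correspondence theorem one shows..."), and your write-up simply supplies the details it leaves implicit.
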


\begin{proof}
We can assume that $G/N$ is non-nilpotent and hence $G/N=\overline{A}%
_{1}\cdots\overline{A}_{k}$ where $k=\gamma_{\text{cp}}\left(  G/N\right)  $
and the $\overline{A}_{i}<G/N$ are pairwise conjugated. Using the
correspondence theorem one shows that $G=A_{1}\cdots A_{k}$ where $A_{i}<G$ is
the inverse image of $\overline{A}_{i}$, and the $A_{i}$ are pairwise conjugated.
\end{proof}

\begin{definition}
A group $G$ is called \textit{a quotient minimal non-nilpotent group
(qmnn-group)} if $G$ is non-nilpotent but $G/N$ is nilpotent whenever
$\left\{  1_{G}\right\}  \neq N\trianglelefteq G$.
\end{definition}

Due to the lifting property $\gamma_{\text{cp}}\left(  G\right)  $ attains
maximal integer values on qmnn-groups, and hence we study their structure. Let
$N\left(  G\right)  $ denote the nilpotent residual of $G$. By definition,
this is the unique normal subgroup of $G$ which satisfies: $G/N\left(
G\right)  $ is nilpotent and for every $N\trianglelefteq G$ such that $G/N$ is
nilpotent we have $N\left(  G\right)  \leq N$. Note that $N\left(  G\right)  $
is the intersection of all $N\trianglelefteq G$ such that $G/N$ is nilpotent.

\begin{lemma}
\label{Lem_MinimalQNonNil}Let $G$ be a qmnn-group. Then:

a. $N\left(  G\right)  $ is the unique minimal normal subgroup of $G$. In
particular, $N\left(  G\right)  \cong T^{m}$ where $T$ is simple and
$m\in\mathbb{N}$, and $G$ is solvable if and only if $N\left(  G\right)  $ is
elementary abelian.

b. $\Phi\left(  G\right)  =Z\left(  G\right)  $ $=1$.

c. $G$ has a faithful primitive action.
\end{lemma}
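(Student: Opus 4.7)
The plan is to exploit two easy consequences of the qmnn hypothesis throughout: first, $G/N$ is nilpotent for every nontrivial normal $N\trianglelefteq G$, so by definition of the nilpotent residual one has $N(G)\leq N$; second, since $G$ itself is non-nilpotent, $N(G)\neq 1$. Together these say that $N(G)$ is a nontrivial normal subgroup contained in every nontrivial normal subgroup of $G$.

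For (a), the observation just made shows directly that $N(G)$ is the unique minimal normal subgroup of $G$. The classical structure theorem for minimal normal subgroups of a finite group then yields $N(G)\cong T^{m}$ for some simple $T$ and some $m\in\mathbb{N}$. Since $G/N(G)$ is already nilpotent (hence solvable), $G$ is solvable iff $N(G)\cong T^{m}$ is solvable iff $T$ is abelian iff $T$ is cyclic of prime order iff $N(G)$ is elementary abelian.

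For (b), both equalities follow the same template. If $Z(G)\neq 1$ then $G/Z(G)$ would be nilpotent by qmnn, and the classical fact that $G/Z(G)$ nilpotent forces $G$ itself to be nilpotent yields a contradiction. Similarly, if $\Phi(G)\neq 1$ then $G/\Phi(G)$ is nilpotent, and the standard theorem that in a finite group this forces $G$ to be nilpotent (proved via a Frattini argument applied to each Sylow subgroup, using that $\Phi(G)$ consists of non-generators) again contradicts non-nilpotency.

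For (c), it suffices to exhibit a core-free maximal subgroup, since then the action of $G$ on its cosets is faithful and primitive. Because $G$ is non-nilpotent it has some non-normal maximal subgroup $M$; set $K=\mathrm{Core}_{G}(M)$, which is normal in $G$ and contained in $M$. If $K\neq 1$ then $G/K$ is nilpotent by qmnn, so every maximal subgroup of $G/K$ is normal; in particular $M/K\trianglelefteq G/K$, forcing $M\trianglelefteq G$, a contradiction. Hence $K=1$. There is no real obstacle in this plan: each part reduces to a single application of the qmnn hypothesis together with a standard nilpotency criterion, the most substantial of which are the two classical implications ``$G/Z(G)$ nilpotent $\Rightarrow G$ nilpotent'' and ``$G/\Phi(G)$ nilpotent $\Rightarrow G$ nilpotent'' invoked in (b), which I would cite rather than reprove.
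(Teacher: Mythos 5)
Your proposal is correct in all three parts and coincides with the paper's argument for (a) and for the $\Phi(G)=1$ half of (b), but it diverges in two places, both legitimately. For $Z(G)=1$ you invoke the classical implication ``$G/Z(G)$ nilpotent $\Rightarrow G$ nilpotent'' (true, via the shift $Z_i(G/Z(G))=Z_{i+1}(G)/Z(G)$ in the upper central series), whereas the paper instead uses part (a) to get $N(G)\leq Z(G)$ and then the identity $[G,N(G)]=N(G)$ for the nilpotent residual to force $N(G)=1$; your route is arguably more elementary and avoids any discussion of the lower central series, while the paper's keeps everything internal to the residual $N(G)$ it has already introduced. For (c) you start from a non-normal maximal subgroup $M$ (which exists since $G$ is non-nilpotent) and show $\mathrm{Core}_G(M)=1$ directly from the qmnn hypothesis via the correspondence theorem, whereas the paper argues by contradiction that if every maximal subgroup had nontrivial core then $N(G)\leq\Phi(G)$, contradicting (b); your version is self-contained and does not depend on (a) or (b), which is a small structural advantage, while the paper's reuses the machinery it has just built. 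Both of your variants are complete and the standard facts you cite rather than reprove (the Gasch\"utz/Wielandt criterion for $\Phi(G)$, the characterization of nilpotency by normality of maximal subgroups, and the structure of minimal normal subgroups as $T^m$) are exactly the ones the paper also takes for granted.
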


\begin{proof}
a. Let $N$ be a minimal normal subgroup of $G$. Then $N>1$ and $G/N$ is
nilpotent. Hence $N\left(  G\right)  \leq N$. But $N\left(  G\right)  >1$
since $G$ is non-nilpotent, so $N\left(  G\right)  =N$.

b. First suppose that $\Phi\left(  G\right)  >1$. Then $G/\Phi\left(
G\right)  $ is nilpotent, so (\cite{Gorenstein2} Corollary 5.1.2) $G$ is
nilpotent - a contradiction. Hence $\Phi\left(  G\right)  =1$. Next Suppose
that $Z\left(  G\right)  >1$. Then $N\left(  G\right)  \leq Z\left(  G\right)
$. Since $N\left(  G\right)  $ is the lowest term in the lower central series,
$\left[  G,N\left(  G\right)  \right]  =N\left(  G\right)  $. But $N\left(
G\right)  \leq Z\left(  G\right)  $ so this gives $\left[  G,N\left(
G\right)  \right]  =1$ and therefore $N\left(  G\right)  =1$ - a contradiction.

c. It is sufficient to prove that one of the maximal subgroups of $G$ is
core-free. Suppose not. Then by (a) $N\left(  G\right)  \leq Core_{G}\left(
M\right)  $ for every maximal subgroup $M$ of $G$. It follows that $N\left(
G\right)  \leq\Phi\left(  G\right)  $ in contradiction to (b).
\end{proof}

Now we exhibit a connection between $\gamma_{\text{cp}}\left(  G\right)  $ and
the ranks of the permutation representations of $G$. Let $G$ be a group and
let $M$ be a proper subgroup of $G$. Set $\Omega=\left\{  Mg|g\in G\right\}  $
(the set of right cosets of $M$ in $G$). Then $G$ acts transitively by right
multiplication on $\Omega$ and the point stabilizer of $M1\in\Omega$ is $M$.
The action of $G$ induces an action of $M$ on $\Omega$ whose orbits are in
bijection with double cosets of $M$, when we view a double coset of $M$ as a
collection of right cosets of $M$: $MxM=\left\{  M\left(  xm\right)  |m\in
M\right\}  $ where $x\in G$. The number of $M$-orbits is denoted $r$ (the rank
of $G$). Note that $r\geq2$ and that $G$ acts $2$-transitively on $\Omega$ if
and only if $r=2$.

\begin{proposition}
\label{Prop_gammaBoundedByRank}Let $G$ be a group and let $M$ be any
non-normal maximal subgroup of $G$. Then $\gamma_{\text{cp}}\left(  G\right)
\leq r+1$. Moreover, if $r=2$ then $\gamma_{\text{cp}}\left(  G\right)  =3$.
\end{proposition}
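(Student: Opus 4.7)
The plan is to exhibit a single product of $r+1$ conjugates of $M$ equal to $G$ by covering each non-trivial double coset of $M$ separately, mimicking and iterating the rank-$2$ argument.

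First I handle $r = 2$. For any $x \in G \setminus M$, maximality and non-normality of $M$ force $N_G(M) = M$, so $M^x \neq M$ and in particular $M^x \not\subseteq M$. Any element $n \in M^x \setminus M$ must lie in $G \setminus M = MxM$, giving $n \in M^x \cap MxM$; then $MxM = MnM \subseteq MM^xM$, and combining with the trivial inclusion $M \subseteq MM^xM$ yields $MM^xM = G$. This gives $\gamma_{\text{cp}}(G) \leq 3$, while Lemma \ref{Lem_G=AB} supplies the matching lower bound $\gamma_{\text{cp}}(G) > 2$, hence equality.

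For general $r \geq 2$, I aim to choose, for each $i \in \{2,\ldots,r\}$, some $y_i \in G$ with $M^{y_i} \cap Mx_iM \neq \emptyset$. Granting this, set
\[
P = M \cdot M^{y_2} \cdot M^{y_3} \cdots M^{y_r} \cdot M,
\]
a product of $r+1$ conjugates of $M$. Since $1 \in M^{y_j}$ for every $j$, selecting the identity from each middle factor except the $i$-th shows $MM^{y_i}M \subseteq P$ for every $i \in \{2,\ldots,r\}$. The rank-$2$ argument applied to $M^{y_i}$ then gives $Mx_iM = Mn_iM \subseteq MM^{y_i}M$, where $n_i \in M^{y_i} \cap Mx_iM$. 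Combined with the trivial inclusion $M \subseteq P$, the product $P$ contains every double coset of $M$, whence $P = G$ and $\gamma_{\text{cp}}(G) \leq r+1$.

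The main obstacle is the existence of the $y_i$, equivalently the claim that every non-trivial double coset of $M$ in $G$ contains an element of $G$ which is $G$-conjugate to some element of $M$. I expect this to follow from a structural argument that leverages the primitivity of $G$'s action on $G/M$ (which is available since $M$ is non-normal and maximal), perhaps by producing, for each $i$, an explicit element of $Mx_iM$ that is forced by the orbit structure on $G/M$ to lie in some conjugate $M^{y_i}$ of $M$.
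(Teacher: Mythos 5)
Your argument for $r=2$ is correct, and your reduction of the general case to the existence of the elements $y_i$ is clearly laid out. But the step you defer --- that every non-trivial double coset $Mx_iM$ meets some conjugate $M^{y_i}$ of $M$ --- is not merely unproven: it is false in general, so the proposed decomposition $P=M\cdot M^{y_2}\cdots M^{y_r}\cdot M$ cannot always be formed. Here is a counterexample. Let $K=A_4$ act on $V=\{x\in\mathbb{F}_{13}^{4}:\sum x_i=0\}\cong\mathbb{F}_{13}^{3}$ by permuting coordinates (an irreducible module since $13\nmid 12$), and let $G=V\rtimes K$ with $M=K$; irreducibility makes $M$ maximal and non-normal. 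Identify $G/M$ with $V$ so that $wk$ ($w\in V$, $k\in K$) acts by $u\mapsto (u+w)^{k}$. For $k=1$ and $w\ne 0$ this is a fixed-point-free translation, and for $k\ne 1$ it has a fixed point if and only if $w\in\operatorname{Im}(k-1)$. Every $1\ne k\in A_4$ has eigenvalue $1$ with multiplicity exactly one on $V$ (eigenvalues $1,-1,-1$ for the involutions, $1,\omega,\omega^{2}$ for the $3$-elements), so each $\operatorname{Im}(k-1)$ is a plane, and the at most $11$ such planes contain at most $11\cdot(13^{2}-1)=1848<13^{3}-1$ non-zero vectors. Choose $v$ outside all of them. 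Since $\bigcup_{k\ne 1}\operatorname{Im}(k-1)$ is $K$-invariant and $MvM=\{wk: w\in v^{K},\ k\in K\}$, every element of $MvM$ is fixed-point-free on $G/M$, i.e.\ $MvM$ meets no conjugate of $M$, and no admissible $y_i$ exists for this double coset. (Your $r=2$ case survives precisely because the unique non-trivial double coset is all of $G\setminus M$ and so automatically contains $M^{x}\setminus M\ne\emptyset$.)

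The paper's proof avoids this obstacle by not attempting to reach each double coset with a single conjugate sandwiched between two copies of $M$. It picks $x\in M_1\setminus M$ for a conjugate $M_1$ of $M$, sets $B=M\cup MxM$, and proves $B^{k+1}=B^{k}\cup(MxM)^{k+1}$. Each $B^{i}$ is a union of double cosets and strictly grows until it stabilizes at a subgroup, which by maximality is $G$; counting double cosets gives stabilization at some $k_0$ with $k_0+1\le r$. Expanding $(MxM)^{i}=MM^{x^{-1}}\cdots M^{x^{-i}}x^{i}$ and multiplying on the right by $M_1$ (which absorbs $x^{k_0}$ because $x\in M_1$) yields $G=MM^{x^{-1}}\cdots M^{x^{-k_0}}M_1$, a product of $k_0+2\le r+1$ conjugates. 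The essential point your approach misses is that the successive powers of a single double coset sweep out new double cosets which need not themselves intersect any conjugate of $M$; the bound $r+1$ comes from counting how many steps this sweeping can take, not from hitting each double coset individually.
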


\begin{proof}
Since $M$ is non-normal there exists a conjugate $M_{1}$ of $M$ such that
$M_{1}\nsubseteq M$. Let $x\in M_{1}-M$. Then $\left\{  M\right\}  $ and $MxM$
are two distinct orbits of the action of $M$. Set $B:=M\cup MxM$. We have:%
\begin{equation}
B^{k+1}=B^{k}\cup\left(  MxM\right)  ^{k+1}\text{, }\forall k\in
\mathbb{N}\text{.} \tag{*}%
\end{equation}
and in particular, $B^{k}\subseteq B^{k+1}$. Equation (*) can be proven by
induction on $k\geq1$ using $M^{2}=M$ and the fact that the setwise product of
$G$ subsets is distributive over union. By finiteness of $G$ there exists a
positive integer $k_{0}$ such that $B^{k_{0}}=B^{k_{0}+1}$. Choose $k_{0}$
which is minimal with respect to this property. Then for every $1\leq i\leq
k_{0}-1$ we have $B^{i}\subset B^{i+1}$. Observe that $B^{i}$ is a union over
a family of $M$-orbits, since each double coset of $M$ is an $M$-orbit, and
product of double cosets is a union of double cosets. We can thus conclude
that if $B^{i}\subset B^{i+1}$, then $B^{i+1}$ contains more orbits of the
action of $M$ than $B^{i}$. Since $B^{i}\subset B^{i+1}$ for all $1\leq i\leq
k_{0}-1$, and $B$ contains two orbits, the number of orbits which are
contained in $B^{k_{0}}$ is at least $k_{0}+1$.

Next observe that $B^{k_{0}}=B^{k_{0}+1}$ implies $\left(  B^{k_{0}}\right)
^{2}=B^{k_{0}}$. Thus $B^{k_{0}}$ is a subgroup. Since $M<B$ and $M$ is
maximal, we get $B^{k_{0}}=G$. By our previous argument it follows that
$k_{0}+1\leq r$.

Finally, using again Equation (*), we have:%
\[
\left(  M\cup MxM\right)  ^{k_{0}}=M\cup MxM\cup\left(  MxM\right)  ^{2}%
\cup...\cup\left(  MxM\right)  ^{k_{0}}\text{.}%
\]
Since $\left(  MxM\right)  ^{i}=MM^{x^{-1}}M^{x^{-2}}\cdots M^{x^{-i}}x^{i}$,
we get:%
\begin{gather*}
G=\left(  M\cup MxM\right)  ^{k_{0}}\\
=M\cup MM^{x^{-1}}x\cup MM^{x^{-1}}M^{x^{-2}}x^{2}\cup...\cup MM^{x^{-1}%
}M^{x^{-2}}\cdots M^{x^{-k_{0}}}x^{k_{0}}\text{.}%
\end{gather*}
Recall that $x\in M_{1}$, and hence $x^{i}M_{1}=M_{1}$ for any integer $i$ and
we get :%
\begin{gather*}
G=GM_{1}\\
=\left(  M\cup MM^{x^{-1}}x\cup MM^{x^{-1}}M^{x^{-2}}x^{2}\cup...\cup
MM^{x^{-1}}M^{x^{-2}}\cdots M^{x^{-k_{0}}}x^{k_{0}}\right)  M_{1}\\
=MM_{1}\cup MM^{x^{-1}}M_{1}\cup MM^{x^{-1}}M^{x^{-2}}M_{1}\cup...\cup
MM^{x^{-1}}M^{x^{-2}}\cdots M^{x^{-k_{0}}}M_{1}\\
=MM^{x^{-1}}M^{x^{-2}}\cdots M^{x^{-k_{0}}}M_{1}\text{,}%
\end{gather*}
where the last step follows from the fact that a product of a sequence of
subgroups contains the product of every subsequence of the sequence. It
follows that $\gamma_{\text{cp}}\left(  G\right)  \leq k_{0}+2\leq r+1$. If
$r=2$ then $\gamma_{\text{cp}}\left(  G\right)  >2$ forces $\gamma_{\text{cp}%
}\left(  G\right)  =3$.
\end{proof}

\begin{remark}
The rank $r$ of the action of $G$ on $\Omega$ is given by $r=%
{\textstyle\sum\limits_{\theta\in Irr\left(  G\right)  }}
m_{\theta}^{2}$, where $m_{\theta}$ is the multiplicity of the irreducible
complex character $\theta$ in the permutation character associated with the
action (see \cite{Isaacs}, Corollary (5.16)).
\end{remark}

\section{$\gamma_{\text{cp}}\left(  G\right)  $ for non-solvable
$G$\label{Section_NonSolvable}}

As we shall see, if $G$ is non-solvable then $\gamma_{\text{cp}}\left(
G\right)  $ is controlled by $\gamma_{\text{cp}}$ of non-solvable qmnn-groups.
Hence we consider the following setting.

\begin{description}
\item[Minimal Non-Solvable Setting] 

\item[1.] $G$ is a non-solvable group with a unique minimal normal subgroup
$N=soc\left(  G\right)  =T^{m}$, where $T$ is simple non-abelian and $m$ a
positive integer.

\item[2.] $X:=N_{G}\left(  T_{1}\right)  /C_{G}\left(  T_{1}\right)  $ where
$T_{1}$ is the first component of $T^{m}$.
\end{description}

Assuming the above setting, $X$ is an almost simple group with $soc\left(
X\right)  \cong T$ (for convenience we will set $T:=soc\left(  X\right)  $).
Furthermore (see Remark 1.1.40.13 of \cite{BEcl}), there is an embedding of
$G$ into $X\wr K=X^{m}\rtimes K$ where the action of $K$ as a transitive
permutation group on the components of $X^{m}$, is determined by the
permutation action of $G$ on the components of $N=T^{m}$. The embedding of $G$
into $X^{m}\rtimes K$ satisfies $GX^{m}=X^{m}\rtimes K$ and hence $K\cong
GX^{m}/X^{m}\cong G/G\cap X^{m}$. Note that $N=T^{m}\trianglelefteq
X^{m}\rtimes K$, but $G$ needs not contain $K$.

\begin{lemma}
\label{Lem_VT=X}Assume the minimal non-solvable setting. Let $V\leq X$ satisfy
$VT=X$. Set $M=V\cap T$. Then $G=N_{G}\left(  M^{m}\right)  N$.
\end{lemma}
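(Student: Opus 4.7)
The plan is to exhibit a subgroup of the overgroup $X\wr K = X^m\rtimes K$ that already normalizes $M^m$ and that together with $N$ fills out the whole wreath product, and then to pull the resulting factorization back to $G$ by a Frattini-style argument. Throughout I use the embedding of $G$ into $X\wr K$ described just before the statement, together with the fact that $N=T^m$ is normal in $X\wr K$.

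First I would observe that $M = V\cap T$ is normal in $V$, so each copy of $V$ in the base group $X^m$ normalizes the corresponding factor of $M^m$, hence $V^m$ normalizes $M^m$. Since the $m$ factors of $M^m$ are identical, the top group $K$ also normalizes $M^m$. Thus
\[
V\wr K = V^m\rtimes K \;\le\; N_{X\wr K}(M^m).
\]
Next I would verify the factorization $X\wr K = (V\wr K)\cdot N$. Because $N=T^m$ is normal in $X\wr K$, the product is a subgroup, and one computes
\[
(V\wr K)\cap T^m \;=\; V^m\cap T^m \;=\; (V\cap T)^m \;=\; M^m,
\]
so an index count gives $|(V\wr K)\cdot N| = |K|\,|V|^m|T|^m/|M|^m = |K|\,|VT|^m = |K|\,|X|^m = |X\wr K|$, where the key step uses the hypothesis $VT=X$.

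Finally I apply the Frattini-type argument inside $G$. Given any $g\in G$, the factorization in the wreath product produces $h\in V\wr K$ and $n\in N$ with $g = hn$. Since $N\le G$, we have $h = gn^{-1}\in G$, so $h\in G\cap(V\wr K)\le N_G(M^m)$, and therefore $g\in N_G(M^m)\,N$. Combined with the trivial reverse inclusion this yields $G = N_G(M^m)\,N$. The only step requiring care is the order computation establishing $X\wr K = (V\wr K)\cdot N$; everything else is immediate once one unwinds the definitions, so I do not anticipate a serious obstacle.
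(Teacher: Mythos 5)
Your proof is correct and follows essentially the same route as the paper: show $V\wr K$ normalizes $M^m$, show $(V\wr K)N=X\wr K$, and intersect with $G$. Your final element-wise step is precisely the standard proof of Dedekind's modular law, which is what the paper invokes at that point, and your order count just fills in the detail the paper leaves to the reader.
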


\begin{proof}
Let $R=V\wr K\leq X\wr K$. Since $V$ normalizes $V\cap T=M$, and $K$
normalizes $M^{m}$, we have that $R$ normalizes $M^{m}$, whence $G\cap R\leq
N_{G}\left(  M^{m}\right)  $. Since $VT=X$ we get $RN=X\wr K$, and by
Dedkind's law,%
\[
G=G\cap\left(  RN\right)  =\left(  G\cap R\right)  N\leq N_{G}\left(
M^{m}\right)  N\text{.}%
\]
Since both $N_{G}\left(  M^{m}\right)  $ and $N$ are contained in $G$ we get
$G=N_{G}\left(  M^{m}\right)  N$.
\end{proof}

\begin{lemma}
\label{Lem_LiftingToNormalizers}Assume the minimal non-solvable setting.
Suppose that $U\leq X$ satisfies $UT=X$ and $\left(  U_{1}\cap T\right)
\cdots\left(  U_{h}\cap T\right)  =T$ where $U_{1},...,U_{h}$ are $h$
conjugates of $U$ in $X$. Then:%
\[
G=N_{G}\left(  \left(  U_{1}\cap T\right)  ^{m}\right)  \cdots N_{G}\left(
\left(  U_{h}\cap T\right)  ^{m}\right)  \text{.}%
\]
In particular, if $1<U\cap T<T$, then $G$ is a product of $h$ conjugates of a
proper subgroup of $G$ and $\gamma_{\text{cp}}\left(  G\right)  \leq h$.
\end{lemma}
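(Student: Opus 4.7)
My plan is to reduce to working inside $N = T^m$ componentwise. Set $M := U \cap T$ and, writing $U_i = U^{x_i}$ for some $x_i \in X$, observe that $M_i := U_i \cap T = M^{x_i}$ since $T \trianglelefteq X$. The crucial preliminary step is that $U \leq N_X(M)$ (because $U$ normalizes $U \cap T$), which gives $N_X(M) T \supseteq U T = X$. Writing $x_i = n_i t_i$ with $n_i \in N_X(M)$ and $t_i \in T$, this yields $M_i = M^{x_i} = M^{t_i}$; in other words, every $X$-conjugate of $M$ is already a $T$-conjugate.

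With this in hand the main factorization follows quickly. Because multiplication in $T^m$ is componentwise, the subset product $M_1^m \cdot M_2^m \cdots M_h^m$ equals $(M_1 M_2 \cdots M_h)^m$, which is $T^m = N$ by hypothesis. Since $M_i^m \leq N_G(M_i^m)$, the product $N_G(M_1^m) \cdots N_G(M_h^m)$ contains $N$. Applying Lemma \ref{Lem_VT=X} with $V = U_h$ (which satisfies $U_h T = X$ because $U T = X$ and $T \trianglelefteq X$) gives $N_G(M_h^m) N = G$, so Lemma \ref{Lem_HA_1...A_k}(1) delivers the desired equality $G = N_G(M_1^m) \cdots N_G(M_h^m)$.

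For the ``in particular'' clause, suppose $1 < M < T$. Each $M_i$ is then a non-trivial proper subgroup of the simple group $T$, hence non-normal in $T$; consequently $M_i^m$ is non-normal in $T^m$, and $N_G(M_i^m) < G$. The identity $M_i = M^{t_i}$ with $t_i \in T$ shows that the diagonal element $(t_i, \ldots, t_i) \in T^m \leq G$ conjugates $M^m$ to $M_i^m$, so the subgroups $N_G(M_i^m)$ are pairwise $G$-conjugate. We thus express $G$ as a product of $h$ pairwise $G$-conjugate proper subgroups, giving $\gamma_{\text{cp}}(G) \leq h$.

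The main subtlety will be bridging the $X$-conjugacy provided by the hypothesis and the $G$-conjugacy required by the definition of $\gamma_{\text{cp}}$. The identity $X = N_X(M) T$ is precisely what makes this transition possible, forcing the conjugating elements to be realizable diagonally inside $T^m \leq G$; once this is in place, the rest of the argument amounts to a routine application of the lemmas already established.
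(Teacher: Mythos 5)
Your proof is correct and follows essentially the same route as the paper's: the componentwise identity $(M_1\cdots M_h)^m = M_1^m\cdots M_h^m = N$, Lemma \ref{Lem_VT=X} with $V=U_h$ plus Lemma \ref{Lem_HA_1...A_k}(1) for the factorization, and the reduction of $X$-conjugacy to $T$-conjugacy via $X=UT$ to get $G$-conjugacy of the normalizers. The only (immaterial) deviations are that you derive $M_i = M^{t_i}$ through $N_X(M)\supseteq U$ rather than directly from $x_i = u_i t_i$ with $u_i\in U$, and that you establish properness of $N_G(M_i^m)$ from the simplicity of $T$ rather than from the minimal normality of $N=T^m$.
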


\begin{proof}
Since $\left(  U_{i}\cap T\right)  ^{m}\leq N_{G}\left(  \left(  U_{i}\cap
T\right)  ^{m}\right)  $ for every $1\leq i\leq m$, we have
\begin{align*}
T^{m}  &  =\left(  \left(  U_{1}\cap T\right)  \cdots\left(  U_{h}\cap
T\right)  \right)  ^{m}=\left(  U_{1}\cap T\right)  ^{m}\cdots\left(
U_{h}\cap T\right)  ^{m}\\
&  \leq N_{G}\left(  \left(  U_{1}\cap T\right)  ^{m}\right)  \cdots
N_{G}\left(  \left(  U_{h}\cap T\right)  ^{m}\right)  \text{.}%
\end{align*}
Taking $V=U_{h}$ in Lemma \ref{Lem_VT=X} (by Lemma \ref{Lem_G=AB}, $U_{h}T=X$)
we conclude that $G=T^{m}N_{G}\left(  \left(  U_{h}\cap T\right)  ^{m}\right)
$. Now, $G=N_{G}\left(  \left(  U_{1}\cap T\right)  ^{m}\right)  \cdots
N_{G}\left(  \left(  U_{h}\cap T\right)  ^{m}\right)  $ follows from Lemma
\ref{Lem_HA_1...A_k}(1), with $H=T^{m}\leq G$, $k=h$ and $A_{i}=N_{G}\left(
\left(  U_{i}\cap T\right)  ^{m}\right)  \leq G$ for all $1\leq i\leq k$.

If $1<U\cap T<T$, then $1<\left(  U\cap T\right)  ^{m}<N=T^{m}$ and since $N$
is minimal normal in $G$, $N_{G}\left(  \left(  U\cap T\right)  ^{m}\right)  $
is a proper subgroup of $G$. Moreover, observe that for all $1\leq i\leq h$,
there exists $t_{i}\in T$ such that $U_{i}=U^{t_{i}}$. This follows from the
fact that $U_{i}$ is conjugate to $U$ in $X$. Hence there exists $x_{i}\in X$
such that $U_{i}=U^{x_{i}}$. However $X=UT$ so $x_{i}=u_{i}t_{i}$ with
$u_{i}\in U$ and $t_{i}\in T$ and hence $U_{i}=U^{x_{i}}=U^{u_{i}t_{i}%
}=U^{t_{i}}$. Furthermore, for all $1\leq i\leq h$, $U_{i}\cap T=U^{t_{i}}\cap
T=\left(  U\cap T\right)  ^{t_{i}}$. Since $T^{m}\leq G$, we can deduce that
for all $1\leq i,j\leq h$, $\left(  U_{i}\cap T\right)  ^{m}$ and $\left(
U_{j}\cap T\right)  ^{m}$ are conjugate in $G$. Finally, since normalizers of
conjugate subgroups are conjugate to each other, $N_{G}\left(  \left(
U_{i}\cap T\right)  ^{m}\right)  $ and $N_{G}\left(  \left(  U_{j}\cap
T\right)  ^{m}\right)  $ are conjugate in $G$, for every $i,j\in
\{1,\ldots,h\}$. This proves that $\gamma_{\text{cp}}\left(  G\right)  \leq h$.
\end{proof}

\begin{corollary}
\label{Coro_X/TForAlternating}Assume the minimal non-solvable setting with
$T\cong A_{n}$, $n\geq5$. Then $\gamma_{\text{cp}}\left(  G\right)  =3$.
\end{corollary}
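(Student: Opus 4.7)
The plan is to apply Lemma \ref{Lem_LiftingToNormalizers} with $h=3$, which reduces $\gamma_{\text{cp}}(G)\le 3$ to finding a subgroup $U\le X$ whose intersection with $T=A_n$ has three $T$-conjugates multiplying to all of $T$.

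First I would pin down $X$. Since $X$ is almost simple with socle $A_n$ and $n\ge 5$, we must have $X\in\{A_n,S_n\}$. Take $U\le X$ to be the stabilizer of the point $n$ in the natural degree-$n$ action of $X$; then $U\cap T=A_{n-1}$, $UT=X$, and $1<U\cap T<T$ because $n\ge 5$. Since $T\trianglelefteq X$ and $A_n$ is already transitive on $\{1,\dots,n\}$, every $X$-conjugate $U^x$ of $U$ satisfies $U^x\cap T=(U\cap T)^{t}$ for some $t\in T$, and conversely every $T$-conjugate of $A_{n-1}$ arises this way. So finding three $X$-conjugates $U_1,U_2,U_3$ of $U$ with $(U_1\cap T)(U_2\cap T)(U_3\cap T)=T$ is equivalent to finding three $T$-conjugates $M_1,M_2,M_3$ of $A_{n-1}$ with $M_1M_2M_3=T$.

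Next I would produce such a triple $M_1,M_2,M_3$ by applying Proposition \ref{Prop_gammaBoundedByRank} to $T$ itself with the non-normal maximal subgroup $M=A_{n-1}$. The coset action of $T$ on $T/M$ is just the natural action of $A_n$ on $n$ points, which is $2$-transitive for $n\ge 5$, so the rank is $r=2$. The proof of Proposition \ref{Prop_gammaBoundedByRank} then gives $k_0=1$ and produces exactly three $T$-conjugates of $M$ whose setwise product is $T$, as required.

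Pulling these back to $X$ and applying Lemma \ref{Lem_LiftingToNormalizers} yields $\gamma_{\text{cp}}(G)\le 3$, and combining with the universal lower bound $\gamma_{\text{cp}}(G)>2$ from Lemma \ref{Lem_G=AB} we conclude $\gamma_{\text{cp}}(G)=3$. I do not expect any real obstacle: everything is bookkeeping between $T$, $X$ and $G$, and the one external ingredient is the elementary $2$-transitivity of $A_n$ on its natural domain. The mildly delicate point is the identification of $X$-conjugates of $U$ with $T$-conjugates of $A_{n-1}$, which is handled by the transitivity remark above.
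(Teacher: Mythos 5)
Your argument is correct and matches the paper's for $n\neq 6$: choose $U$ to be a natural point stabilizer ($A_{n-1}$ or $S_{n-1}$), use $2$-transitivity of $A_n$ on $n$ points via Proposition \ref{Prop_gammaBoundedByRank} to write $T$ as a product of three $T$-conjugates of $A_{n-1}$, feed this into Lemma \ref{Lem_LiftingToNormalizers} with $h=3$, and finish with the lower bound from Lemma \ref{Lem_G=AB}. However, there is a genuine gap at $n=6$. Your opening claim that an almost simple group with socle $A_n$ must be $A_n$ or $S_n$ is false for $n=6$: since $\mathrm{Out}(A_6)\cong C_2\times C_2$, the group $X$ can also be $\mathrm{PGL}_2(9)$, $M_{10}$, or $\mathrm{P\Gamma L}_2(9)=\mathrm{Aut}(A_6)$. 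In those cases your construction breaks down, not just in bookkeeping but in substance: Lemma \ref{Lem_LiftingToNormalizers} requires a subgroup $U\leq X$ with $UT=X$ and $U\cap T$ a natural point stabilizer $A_5$, which would force $|U|=|X:T|\cdot 60$; but $M_{10}$ and $\mathrm{PGL}_2(9)$ have no subgroup of order $120$, and since the exceptional outer automorphism swaps the two conjugacy classes of $A_5$'s in $A_6$, the normalizer in $\mathrm{Aut}(A_6)$ of a natural $A_5$ is only $S_5$, so no suitable $U$ exists for $\mathrm{P\Gamma L}_2(9)$ either.

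The paper closes exactly this hole by treating $n=6$ separately: it uses the other $2$-transitive action of $A_6$, of degree $10$, whose point stabilizer is $N_T(P)$ for $P$ a Sylow $3$-subgroup, so that $T$ is a product of three conjugates of $N_T(P)$; the Frattini argument then gives $X=N_X\left(N_T(P)\right)T$ for \emph{every} $X$ with socle $A_6$, and $U=N_X\left(N_T(P)\right)$ satisfies the hypotheses of Lemma \ref{Lem_LiftingToNormalizers} with $h=3$. You would need to add this (or some equivalent device that works for all five groups between $A_6$ and $\mathrm{Aut}(A_6)$) to make your proof complete.
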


\begin{proof}
For $n\neq6$, we have either $X\cong A_{n}$ or $X\cong S_{n}$. Now $T$ acts
$2$-transitively on $\left\{  1,...,n\right\}  $ with a point stabilizer which
is isomorphic to $A_{n-1}$. By Proposition \ref{Prop_gammaBoundedByRank},
$A_{n}$ is a product of three suitable conjugates of $A_{n-1}$. For $X\cong
A_{n}$ we can choose $U\cong A_{n-1}$ and for $X\cong S_{n}$ we can choose
$U\cong S_{n-1}$ so that in both cases $U$ satisfies all of the assumptions of
Lemma \ref{Lem_LiftingToNormalizers} with $h=3$, and hence $\gamma_{\text{cp}%
}\left(  G\right)  =3$. For $n=6$ we use the fact that $T\cong A_{6}$ has
another $2$-transitive action of degree $10$, whose point stabilizer is a
normalizer of a Sylow $3$-subgroup $P$ of $T$ (see, for example,
\cite{WebAtlas} permutation representations of $A_{6}$). Thus $T$ is a product
of three conjugates of $N_{T}\left(  P\right)  $. Since all of the normalizers
of Sylow $3$-subgroups of $T$ are conjugate in $T$ and $T\trianglelefteq X$,
we have, by the Frattini argument, that $X=N_{X}\left(  N_{T}\left(  P\right)
\right)  T$. Taking $U=N_{X}\left(  N_{T}\left(  P\right)  \right)  $ one
checks that $U$ satisfies all of the assumptions of Lemma
\ref{Lem_LiftingToNormalizers} with $h=3$, and therefore $\gamma_{\text{cp}%
}\left(  G\right)  =3$ also for $n=6$.
\end{proof}

\begin{corollary}
\label{Coro_X/TForSporadicAndTits}Assume the minimal non-solvable setting with
$T$ a sporadic simple group, or the Tit's group $^{2}F_{4}\left(  2\right)
^{\prime}$. Then $\gamma_{\text{cp}}\left(  G\right)  \leq36$.
\end{corollary}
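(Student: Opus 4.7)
The plan is to apply Lemma \ref{Lem_LiftingToNormalizers} in tandem with Proposition \ref{Prop_gammaBoundedByRank}. For each $T$ in the list, the strategy is to exhibit a maximal subgroup $M$ of $T$ such that (i) the $T$-conjugacy class of $M$ is invariant under $\mathrm{Aut}(T)$, so that it is a fortiori $X$-invariant for every almost simple group $X$ with socle $T$, and (ii) the rank of the primitive action of $T$ on the coset space $T/M$ is at most $35$.

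Given such an $M$, I would set $U := N_X(M)$. Condition (i) combined with the Frattini argument yields $UT = X$, and the maximality of $M$ in the simple group $T$ gives $U \cap T = M$, with $1 < M < T$. Proposition \ref{Prop_gammaBoundedByRank} applied to the pair $(T, M)$ then furnishes a factorization $T = M^{t_1}\cdots M^{t_h}$ with $t_1,\ldots,t_h \in T$ and $h \leq r + 1 \leq 36$. Setting $U_i := U^{t_i}$ gives $X$-conjugates of $U$ with $U_i \cap T = M^{t_i}$, so $(U_1 \cap T)\cdots(U_h \cap T) = T$, and Lemma \ref{Lem_LiftingToNormalizers} concludes $\gamma_{\text{cp}}(G) \leq h \leq 36$.

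The main obstacle is verifying condition (ii) for each of the twenty-seven groups in the list, which requires a case-by-case inspection. For the majority of sporadic groups the \textsc{Atlas} records primitive actions of very small rank (often at most five), so those cases are immediate. The harder cases are the largest sporadic groups --- the Monster, Baby Monster, and to a lesser extent $\mathrm{Fi}_{24}'$, $\mathrm{Th}$, $\mathrm{HN}$, $\mathrm{Ly}$ --- where the smallest faithful primitive representations have comparatively high rank; for these one would consult the decomposition of the permutation character into irreducibles (using $r = \sum_{\theta} m_\theta^2$ from the remark following Proposition \ref{Prop_gammaBoundedByRank}, together with tabulated fusion and character data) to pin down the rank of a conveniently chosen action. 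The numerical constant $36$ is then dictated by the worst case in this inspection, and a minor separate check is needed to confirm that the chosen class of $M$ is fixed by $\mathrm{Out}(T)$ rather than being swapped with another class; when the obvious choice fails this test, one replaces it by a second maximal subgroup of comparable rank.
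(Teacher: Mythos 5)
Your proposal is correct and follows essentially the same route as the paper: pick, for each $X$, a subgroup $U$ with $UT=X$ whose intersection $U\cap T$ is a maximal subgroup of $T$ of small rank, apply Proposition \ref{Prop_gammaBoundedByRank} to the pair $\left(T,U\cap T\right)$, lift the resulting factorization of $T$ through Lemma \ref{Lem_LiftingToNormalizers}, and read off the constant $36$ from a case-by-case inspection whose worst case is $\mathrm{Aut}\left(O^{\prime}N\right)$ with $U=J_{1}\times2$ and $r=35$. The only slightly optimistic phrase is that the replacement forced when $\mathrm{Out}\left(T\right)$ fuses classes has ``comparable rank'' --- for $O^{\prime}N$ the rank jumps from $5$ (for $L_{3}(7).2$) to $35$ (for $J_{1}$), and this single case is what dictates the bound --- but this does not affect the validity of your argument.
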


\begin{proof}
Under our assumptions $\left\vert Aut\left(  T\right)  :T\right\vert \leq2$ so
$X$ is either $T$ or $Aut\left(  T\right)  $, where the second possibility
arises if $\left\vert Aut\left(  T\right)  :T\right\vert =2$. For each of the
27 possible $T$'s, and for each of the possible $X$ corresponding to a given
$T$, we wish to choose $U$ which satisfies the conditions of Lemma
\ref{Lem_LiftingToNormalizers} such that $U\cap T$ has the smallest rank\ with
respect to the action of $T$ on the coset space $\left\{  \left(  U\cap
T\right)  x|x\in T\right\}  $. By Proposition \ref{Prop_gammaBoundedByRank}
and Lemma \ref{Lem_LiftingToNormalizers}, $\gamma_{\text{cp}}\left(  G\right)
\leq$ $r+1$. For $X=T$ we choose $U$ to be a maximal subgroup of $T$ with
minimal rank. For $X=Aut\left(  T\right)  $ where $\left\vert Aut\left(
T\right)  :T\right\vert =2$, we choose $U$ to be a maximal subgroup of $X$
which is not contained in $T$, such that $U\cap T$ is maximal in $T$ and its
rank with respect to $T$ is minimal. Examining Table \ref{TblRanks} in the
appendix, which summarizes these choices, one finds that the largest bound,
$r+1=36$, is realized for $Aut\left(  O^{\prime}N\right)  $ with
$U=J_{1}\times2$.
\end{proof}

\begin{theorem}
\label{Th_gammaForUniqueNinNonAbelian}Assume the minimal non-solvable setting,
then $\gamma_{\text{cp}}\left(  G\right)  \leq36$.
\end{theorem}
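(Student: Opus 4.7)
The plan is a case analysis, via the Classification of Finite Simple Groups, on the isomorphism type of the simple factor $T$ of $N = T^{m}$. The three cases $T \cong A_{n}$ with $n \geq 5$, $T$ sporadic, and $T \cong {}^{2}F_{4}(2)^{\prime}$ are already covered by Corollaries \ref{Coro_X/TForAlternating} and \ref{Coro_X/TForSporadicAndTits}. Hence it remains only to establish the bound when $T$ is a simple group of Lie type distinct from the Tits group.

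For such $T$ I would combine Proposition \ref{Prop_gammaBoundedByRank} with Lemma \ref{Lem_LiftingToNormalizers}. The strategy is to exhibit a maximal subgroup $M < T$ such that the permutation rank $r$ of $T$ on the coset space $T/M$ satisfies $r \leq 35$, and such that $M$ lifts to a subgroup $U \leq X$ with $UT = X$ and $1 < U \cap T < T$; then Proposition \ref{Prop_gammaBoundedByRank} applied in $T$ with $M$ writes $T$ as a product of at most $r + 1$ conjugates of $M$, and Lemma \ref{Lem_LiftingToNormalizers} propagates this factorization into $G$, giving $\gamma_{\text{cp}}(G) \leq r + 1 \leq 36$. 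The natural candidate for $M$ is a maximal parabolic subgroup: its permutation rank is governed by the Bruhat decomposition and equals $|W_{J} \backslash W / W_{J}|$ for the corresponding parabolic $W_{J}$ of the Weyl group $W$ of $T$. For classical $T$, the stabiliser of a point or an isotropic line in the natural module gives a $2$-transitive or rank $3$ action, so the bound is already $3$ or $4$; for each of the exceptional and twisted families $G_{2}(q), F_{4}(q), E_{6}(q), E_{7}(q), E_{8}(q), {}^{2}B_{2}(q), {}^{2}G_{2}(q), {}^{3}D_{4}(q), {}^{2}F_{4}(q)$, one consults the Weyl-group combinatorics to select a node whose associated maximal parabolic has permutation rank uniformly at most $35$.

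To produce the required lift, invoke a Frattini-type argument: each outer automorphism of $T$ permutes the set of $T$-conjugates of a maximal parabolic of a fixed type, so $X$ acts transitively on this $T$-conjugacy class (possibly after fusion under a graph automorphism), yielding $X = N_{X}(M) \cdot T$. Setting $U := N_{X}(M)$, we have $UT = X$, and the self-normality of parabolics in $T$ gives $U \cap T = N_{T}(M) = M$, whence $1 < U \cap T < T$. This verifies the hypotheses of Lemma \ref{Lem_LiftingToNormalizers}.

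The main obstacle is the uniform verification of $r \leq 35$ across all exceptional Lie-type families at every prime power $q$. A Weyl-group double-coset count must be carried out or located in the literature for at least one maximal-parabolic type in each of $G_{2}$, $F_{4}$, ${}^{3}D_{4}$, $E_{6}$, $E_{7}$, $E_{8}$ and their twisted analogues, and confirmed to remain strictly below $36$ independently of $q$. A secondary subtlety arises in the presence of a graph automorphism that fuses two parabolic classes (notably for $D_{n}$ and $E_{6}$), where one must check that the Frattini-argument lift $U$ is still a proper subgroup of $X$ with $UT = X$; should this fail in an isolated case, one replaces the parabolic by a graph-stable maximal subgroup of comparably small rank.
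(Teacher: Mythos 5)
Your reduction to the case where $T$ is of Lie type is exactly the paper's: the alternating, sporadic and Tits cases are indeed disposed of by Corollaries \ref{Coro_X/TForAlternating} and \ref{Coro_X/TForSporadicAndTits}, and your mechanism for passing from a factorization of $T$ into conjugates of $U\cap T$ up to $G$ (via Lemma \ref{Lem_LiftingToNormalizers}, with $UT=X$ supplied by a Frattini argument) is also the paper's. Where you diverge is the Lie-type case itself, and there your argument has a genuine gap: the entire quantitative content of the theorem in that case rests on the claim that every simple group of Lie type admits a maximal parabolic (or graph-stable substitute) whose permutation rank $\left\vert W_{J}\backslash W/W_{J}\right\vert$ is at most $35$, and you do not verify this anywhere --- you explicitly defer it to ``consulting the Weyl-group combinatorics.'' This is not a routine omission: it requires a double-coset count for at least one parabolic in each of $G_{2}$, $F_{4}$, $E_{6}$, $E_{7}$, $E_{8}$ and the twisted families, plus a resolution of the cases where the relevant parabolic class is not stable under graph automorphisms ($L_{n}(q)$ with the inverse-transpose automorphism, $D_{4}(q)$ with triality, $E_{6}(q)$), where $N_{X}(M)T=X$ genuinely fails for the natural small-rank choice and your fallback (``a graph-stable maximal subgroup of comparably small rank'') is asserted rather than exhibited. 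As written, the proof establishes the bound only for the families where you actually name a $2$-transitive or rank-$3$ action.

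The paper avoids all of this with a single citation: Theorem D of Liebeck--Pyber states that a simple group of Lie type in characteristic $p$ is a product of at most $25$ of its Sylow $p$-subgroups, which are conjugate by Sylow's theorem. Taking $P\in\mathrm{Syl}_{p}(T)$ and $U=N_{X}(P)$, the Frattini argument gives $UT=X$, one has $U\cap T=N_{T}(P)$ with $1<P\leq U\cap T<T$, and since $N_{T}(P)\supseteq P$ the product of the corresponding $25$ conjugates of $N_{T}(P)$ already contains $T$, so Lemma \ref{Lem_LiftingToNormalizers} applies with $h\leq25$, uniformly over all Lie-type families and all $q$, with no rank computation and no graph-automorphism casework (Sylow subgroups, unlike a fixed parabolic class, are automatically permuted transitively by $X$). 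If you want to pursue your parabolic-rank route, it would likely yield better constants in many families, but you must actually supply the double-coset counts and the graph-automorphism fixes; as it stands the paper's route is the one that closes the argument.
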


\begin{proof}
We use the classification of finite simple non-abelian groups and split the
discussion according to the isomorphism type of $T$.

1. $T\cong A_{n}$, $n\geq5$. By Corollary \ref{Coro_X/TForAlternating}
$\gamma_{\text{cp}}\left(  G\right)  =3$.

2. $T$ is a simple group of Lie type of characteristic $p$. By Theorem D of
\cite{LiebeckPyber} we have that $T$ is a product of at most $25$ Sylow
$p$-subgroups (which are of course conjugate to each other by Sylow's
theorem). Let $P$ be a Sylow $p$-subgroup of $T$. By Frattini's argument
$X=N_{X}\left(  P\right)  T$. Now we can apply Lemma
\ref{Lem_LiftingToNormalizers} with $U=N_{X}\left(  P\right)  $. Note that
$U\cap T=N_{X}\left(  P\right)  \cap T=N_{T}\left(  P\right)  <T$ since $T$ is
simple and clearly $\left\{  1_{T}\right\}  <P\leq U\cap T$. In particular, we
can assume that $h$ in Lemma \ref{Lem_LiftingToNormalizers} satisfies
$h\leq25$. We deduce $\gamma_{\text{cp}}\left(  G\right)  \leq25$ whenever $T$
is a simple group of Lie type.

3. $T$ is one of the $26$ sporadic simple groups or $T$ is the Tit's group
$^{2}F_{4}\left(  2\right)  ^{\prime}$. By Corollary
\ref{Coro_X/TForSporadicAndTits} we have $\gamma_{\text{cp}}\left(  G\right)
\leq36$.

Thus $\gamma_{\text{cp}}\left(  G\right)  \leq36$.
\end{proof}

\begin{proof}
[Proof of Theorem \ref{Th_NScase_gamma<=36}]We proceed by induction on the
length $l\left(  G\right)  $ of a chief series of $G$ ($G$ can be any
non-solvable group). If $l\left(  G\right)  =1$ then $G$ is simple non-abelian
and $\gamma_{\text{cp}}\left(  G\right)  \leq36$ by Theorem
\ref{Th_gammaForUniqueNinNonAbelian}. If $l\left(  G\right)  >1$ there are two
possibilities to consider:

1. Either $G$ has an abelian minimal normal subgroup $N_{0}$, or all minimal
normal subgroups of $G$ are non-abelian and $G$ has at least two minimal
normal subgroups. In the first case set $N:=N_{0}$ and in the second case set
$N$ to any minimal normal subgroup of $G$. Then $G/N$ is non-solvable and
$l\left(  G/N\right)  <l\left(  G\right)  $, so, by induction, $\gamma
_{\text{cp}}\left(  G/N\right)  \leq36$ and therefore, by the lifting
property, $\gamma_{\text{cp}}\left(  G\right)  \leq36$.

2. $G$ has a unique minimal normal subgroup $N$ which is non-abelian. Then
$\gamma_{\text{cp}}\left(  G\right)  \leq36$ by Theorem
\ref{Th_gammaForUniqueNinNonAbelian}.
\end{proof}

\begin{remark}
\label{Rem_Improving36}We strongly suspect that the upper bound on
$\gamma_{\text{cp}}\left(  G\right)  $ where $G$ is non-solvable can be
significantly lowered. The "worst case" in the proof of Theorem
\ref{Th_gammaForUniqueNinNonAbelian} is associated with $Aut\left(  O^{\prime
}N\right)  $. After submitting the paper we have discovered a new method to
evaluate $\gamma_{\text{cp}}\left(  G\right)  $, where $G$ is a sporadic
almost simple group, which we believe ("work in progress") will eliminate this
case. Furthermore, in case (2) of Theorem \ref{Th_gammaForUniqueNinNonAbelian}
(groups of Lie type) we have taken a conservative approach in choosing to rely
on Theorem D of \cite{LiebeckPyber} which yields $\gamma_{\text{cp}}\left(
G\right)  \leq25$. Since \cite{LiebeckPyber} was published there appeared in
the literature claims for improving it. In \cite{BabaiNikolovPyber} it was
announced that every simple group of Lie type in characteristic $p$ is a
product of just five of its Sylow $p$-subgroups, although, as far as we know,
no complete proof has yet been published. A sketch of a proof for exceptional
Lie type groups appears in a survey by Pyber and Szabo (\cite{PyberSzabo}
Theorem 15). For classical Chevalley groups a better bound of four is claimed
by Smolensky, Sury and Vavilov in \cite{VavilovSmolenskySury}.
\end{remark}

\section{\bigskip$\gamma_{\text{cp}}\left(  G\right)  $ for solvable
$G$\label{Section_Solvable}}

If $G$ is solvable then it is clear that $\gamma_{\text{cp}}\left(  G\right)
$ is controlled by $\gamma_{\text{cp}}$ of solvable qmnn-groups. By Lemma
\ref{Lem_MinimalQNonNil}(c), these groups are primitive. Using known
properties of primitive solvable groups (Theorem (A15.6) of \cite{Doerk and
Hawkes}) we can assume the following setting in our discussion.

\begin{description}
\item[Minimal Solvable Setting] 

\item[1.] $G=V\rtimes K$, where $V$ is an elementary abelian group of order
$p^{n}$, $p$ a prime and $n$ a positive integer, $K$ is a non-trivial
irreducible nilpotent subgroup of $GL\left(  V\right)  \cong GL_{n}\left(
p\right)  $, with $k:=\left\vert K\right\vert $ not divisible by $p$, and
$\rtimes$ is the semi-direct product with respect to action of $K$ on $V$
obtained by restriction from the action of $GL\left(  V\right)  $ on $V$.

\item[2.] $V$ is the unique minimal normal subgroup of $G$, and all
complements to $V$ in $G$ are conjugate to $K$.
\end{description}

Note that the non-trivial action of $K$ on $V$ implies that $G$ is
non-nilpotent. When convenient we regard $V$ as a vector space of dimension
$n$ over the field $F_{p}$ of $p$ elements and use additive notation for $V$
and even a mixture of additive and multiplicative notation.

\begin{lemma}
Assume the minimal solvable setting. If $M\leq G$ is maximal then either
$M\cap V=1$ in which case $M$ is conjugate to $K$ or $V\leq M$, in which case
$M\trianglelefteq G$. In particular, $G$ is the product of $\gamma_{\text{cp}%
}\left(  G\right)  $ conjugates of $K$.
\end{lemma}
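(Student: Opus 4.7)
The plan is to analyze $M \cap V$. Since $V$ is abelian and $V \trianglelefteq G$, the subgroup $M \cap V$ is normalized by $M$ and (trivially) by $V$, hence by $MV$. I would then split into two cases depending on whether $MV = G$ or $MV \lneq G$.

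In the case $MV = G$, the observation above gives $M \cap V \trianglelefteq G$. By part (2) of the minimal solvable setting, $V$ is the unique minimal normal subgroup of $G$, so $M \cap V$ is either trivial or contains $V$. The option $V \leq M$ is incompatible with $MV = G$ and $M$ proper, so $M \cap V = 1$. Hence $M$ is a complement to $V$ in $G$, and another invocation of part (2) yields that $M$ is conjugate to $K$. In the case $MV \lneq G$, maximality of $M$ forces $M = MV$, so $V \leq M$. Then $M/V$ is a maximal subgroup of $G/V$; since $G/V \cong K$ is nilpotent and maximal subgroups of nilpotent groups are normal, $M/V \trianglelefteq G/V$, so $M \trianglelefteq G$. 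This establishes the dichotomy.

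For the ``in particular'' statement, I would take a conjugate product covering $(A_1,\ldots,A_\gamma)$ of $G$ of minimal length $\gamma := \gamma_{\text{cp}}(G)$, so that $A_i = A^{g_i}$ for some proper $A \leq G$ and $g_i \in G$. Pick a maximal subgroup $M$ with $A \leq M$; then $M^{g_i} \supseteq A^{g_i}$, and replacing each factor gives $G = M^{g_1}\cdots M^{g_\gamma}$, a conjugate product covering of the same length by conjugates of $M$. Now $M$ cannot be normal in $G$: otherwise all its conjugates equal $M$ and the product would be $M \neq G$. Applying the dichotomy to this non-normal maximal $M$ forces $M \cap V = 1$ and $M$ conjugate to $K$, so every $M^{g_i}$ is conjugate to $K$, proving the claim.

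The only mildly subtle point is justifying the reduction to maximal subgroups in the last step: one must notice that enlarging each $A_i$ to a maximal overgroup preserves both the fact that the product is $G$ and the pairwise-conjugacy of the factors (because $A^{g_i} \leq M^{g_i}$ when $A \leq M$). Everything else is a straightforward consequence of the hypotheses in the minimal solvable setting and the normality of maximal subgroups in nilpotent groups.
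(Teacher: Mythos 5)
Your proof is correct and follows essentially the same route as the paper: the dichotomy is obtained from the observation that $M\cap V$ is normalized by both $M$ and $V$ together with the minimality of $V$, the conjugacy of complements, and the nilpotency of $G/V$ (the paper phrases this as a contradiction from $1<M\cap V<V$ rather than a case split on $MV$, but the content is identical). You also supply the derivation of the ``in particular'' clause, which the paper leaves implicit; your argument for it (enlarge the factors of a minimal conjugate product covering to maximal subgroups and note the result cannot be normal) is the intended one and is correct.
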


\begin{proof}
Suppose by contradiction that $1<M\cap V<V$. Then $V\nleq M$ and hence $G=MV$.
Now $M\cap V$ is normalized by $V$ since $V$ is abelian, and by $M$ since $M$
normalizes $V$ and itself. Hence $M\cap V\trianglelefteq MV=G$, contradicting
$1<M\cap V<V$, and the fact that $V$ is minimal normal in $G$. If $M\cap V=1$
then $M$ complements $V$ in $G$ and hence it is conjugate to $K$. If $M\cap
V=V$ then $V\leq M$, and then, since $G/V$ is nilpotent and $M$ maximal in
$G$, $M/V\trianglelefteq G/V$ and $M\trianglelefteq G$ by the correspondence theorem.
\end{proof}

\begin{lemma}
\label{Lem_vinKK^t}Assume the minimal solvable setting. For any $v\in V$ there
exists $t\in V$ such that $v\in KK^{t}$.
\end{lemma}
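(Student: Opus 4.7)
The plan is to translate the membership condition $v \in KK^t$ (for $v, t \in V$ regarded as elements of $G = V \rtimes K$) into a linear-algebraic condition on $V$ as a $K$-module, and then exploit nilpotence of $K$ together with irreducibility and faithfulness of the action on $V$.

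First I would compute $K^t$ explicitly. Writing elements of $G$ as pairs $(u,k)$ with multiplication $(u_1,k_1)(u_2,k_2) = (u_1 + k_1(u_2), k_1 k_2)$, a direct calculation gives $(t,1)^{-1}(0,k)(t,1) = (k(t)-t,\,k)$, so $K^t = \{(k(t)-t,k) : k \in K\}$. Multiplying out $KK^t$ and comparing with $(v,1)$, the equation $k_1 k_2 = 1$ forces $k_2 = k_1^{-1}$, and the $V$-component then reduces to $(1-k_1)(t) = v$. Thus the statement is equivalent to: \emph{for every $v \in V$ there exist $k \in K$ and $t \in V$ with $(1-k)(t) = v$.}

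Next I would use that $K$ is a non-trivial nilpotent group, so $Z(K) \neq 1$; pick any $z \in Z(K) \setminus \{1\}$. The fixed subspace $V^z = \ker(1-z)$ is $K$-invariant because $z$ is central: for any $g \in K$ and $w \in V^z$, $z(g(w)) = g(z(w)) = g(w)$. Since $K$ acts irreducibly on $V$, $V^z$ is either $0$ or $V$. It cannot be $V$ (that would force $z$ to act trivially on $V$, contradicting faithfulness of $K \le GL(V)$ and $z \neq 1$), so $V^z = 0$. Hence $1-z$ is injective on the finite-dimensional space $V$, therefore bijective.

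Given $v \in V$, set $t := (1-z)^{-1}(v) \in V$. Then $(1-z)(t) = v$, which by the reformulation above means $v \in KK^t$, completing the proof. There is no real obstacle here: the only bookkeeping step is the semidirect-product calculation of $K^t$, and the key conceptual input is the standard fact that a central element of an irreducible faithful linear group must act fixed-point-freely.
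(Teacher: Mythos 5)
Your proof is correct and follows essentially the same route as the paper: both arguments pick a non-trivial central element $z$ of the nilpotent group $K$ and show that $1-z$ acts bijectively on $V$, the paper by showing the image $\left\{ v^{z}-v\mid v\in V\right\}$ is a normal subgroup of $G$ equal to $V$ by minimal normality, you by showing the kernel is a $K$-invariant subspace equal to $0$ by irreducibility and faithfulness. These are the same argument in dual form, and your explicit semidirect-product computation of the condition $v\in KK^{t}$ matches the paper's commutator identity $v=x^{-1}x^{w^{-1}}$.
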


\begin{proof}
For any $x\in K$ set $C(x^{-1},V):=\left\{  x^{-1}vxv^{-1}=v^{x}-v|v\in
V\right\}  \subseteq V$. Note that $C(x^{-1},V)\leq V$ because
\[
v^{x}-v+u^{x}-u=\left(  v+u\right)  ^{x}-\left(  v+u\right)  \in
C(x^{-1},V)\text{, }\forall v,u\in V\text{.}%
\]
Since $V$ is abelian it is clear that $C(x^{-1},V)$ is normalized by $V$. We
now prove that if $x\in Z\left(  K\right)  $ then $C(x^{-1},V)$ is normalized
by $K$ as well:%
\[
\left(  v^{x}-v\right)  ^{y}=v^{xy}-v^{y}=v^{yx}-v^{y}=\left(  v^{y}\right)
^{x}-v^{y}\in C(x^{-1},V)\text{, }\forall v\in V,\forall y\in K\text{.}%
\]
Therefore, assuming $x\in Z\left(  K\right)  $ we get that $C(x^{-1}%
,V)\trianglelefteq G$. Since $C(x^{-1},V)\leq V$ and $V$ is minimal normal
this implies that either $C(x^{-1},V)=\left\{  0_{V}\right\}  $ or
$C(x^{-1},V)=V$. Suppose, in addition, that $x\neq1_{G}$ (since $K$ is
nilpotent such a choice of $x$ exists). Now $C(x^{-1},V)=\left\{
0_{V}\right\}  $ implies that $V$ centralizes $\left\langle x\right\rangle $
and since $x\in Z\left(  K\right)  $ it follows that $x\in Z\left(  G\right)
$ in contradiction to Lemma \ref{Lem_MinimalQNonNil}(b). Thus, if $x\neq1$ we
can conclude $C(x^{-1},V)=V$.

Let $v\in V$ be arbitrary. We wish to show that there exists $t\in V$ such
that $v\in KK^{t}$. Choose $1\neq x\in Z\left(  K\right)  $. Then
$C(x^{-1},V)=V$ and hence there exists $w\in V$ such that $v=w^{x}%
-w=x^{-1}wxw^{-1}=x^{-1}x^{w^{-1}}\in KK^{w^{-1}}$. Thus $t=w^{-1}$ satisfies
the claim.
\end{proof}

\begin{theorem}
\label{Th_SolvableBounds}Assume the minimal solvable setting. Then:%
\[
n\frac{\log_{2}p}{\log_{2}k}+1\leq\gamma_{\text{cp}}\left(  G\right)
\leq2n\left(  \log_{2}p+1\right)  \text{.}%
\]

\end{theorem}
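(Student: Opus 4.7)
The lower bound is a straightforward cardinality argument. By the Lemma preceding Lemma \ref{Lem_vinKK^t}, $G$ is itself a product of $m:=\gamma_{\text{cp}}(G)$ conjugates of $K$, each of size $k$, so $|G|\leq k^m$. Combined with $|G|=p^n k$ this gives $p^n\leq k^{m-1}$, hence $m\geq 1+n\log_2 p/\log_2 k$.

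For the upper bound my plan is to produce $G$ as a product of at most $2n\lceil\log_2 p\rceil$ conjugates of $K$. Fix once and for all an element $x\in Z(K)\setminus\{1\}$, as in the proof of Lemma \ref{Lem_vinKK^t}; recall that $w\mapsto w^x - w$ is then a bijection on $V$. A short computation in $G=V\rtimes K$, using $kv=v^{k^{-1}}k$ for $v\in V$ and $k\in K$, yields the decoupled description $K^w K=(w^K-w)\cdot K$ and, more generally, for arbitrary $W_1,W_2\subseteq V$,
\[
(W_1 K)(W_2 K)=(W_1+W_2^K)\,K, \qquad W_2^K:=\{u^k: u\in W_2,\ k\in K\}.
\]
In particular, every $v\in V$ can be written as $v=w^x-w=(w^{-1}x^{-1}w)\cdot x\in K^w\cdot K$ for a unique $w\in V$, providing a variant of Lemma \ref{Lem_vinKK^t} that is better adapted to computing products.

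Fix an $F_p$-basis $v_1,\ldots,v_n$ of $V$ and set $\ell:=\lceil\log_2 p\rceil$. For each pair $(i,j)$ with $1\leq i\leq n$ and $0\leq j\leq\ell-1$, choose $w_{ij}\in V$ such that $w_{ij}^x-w_{ij}=2^j v_i$, and form the lexicographically ordered product $P:=\prod_{(i,j)}K^{w_{ij}}K$, consisting of $2n\ell$ conjugates of $K$. Iterating the product formula gives
\[
P=\Big(W_{1,0}+\sum_{(i,j)\neq(1,0)}W_{ij}^K\Big)K,
\]
with $W_{ij}:=w_{ij}^K-w_{ij}$ and $W_{ij}^K=w_{ij}^K-w_{ij}^K$. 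Both $W_{1,0}$ and every $W_{ij}^K$ contain $0$ and $2^j v_i$, realized by appropriate choices of the parametrizing elements in $K$. Hence, given any $v=\sum_i c_i v_i\in V$, writing $c_i=\sum_{j=0}^{\ell-1}b_{ij}2^j$ in binary and contributing $b_{ij}\cdot 2^j v_i$ from the $(i,j)$-th factor produces total $V$-part $\sum_{(i,j)}b_{ij}2^j v_i=v$. Consequently the $V$-part of $P$ equals $V$, so $P=V\cdot K=G$, and $\gamma_{\text{cp}}(G)\leq 2n\ell\leq 2n(\log_2 p+1)$.

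The principal technical step is establishing and correctly iterating the product formula $(W_1 K)(W_2 K)=(W_1+W_2^K)K$ inside $V\rtimes K$; once this is in hand, the binary-expansion argument and the choices of the $w_{ij}$ are short and routine.
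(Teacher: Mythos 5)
Your proof is correct and follows essentially the same route as the paper: the identical counting argument for the lower bound, and for the upper bound the same construction of $2n\lceil\log_2 p\rceil$ conjugates of $K$ indexed by basis vectors and binary digits, with Lemma \ref{Lem_vinKK^t} supplying the elements $2^jv_i\in K^{w_{ij}}K$ and the binary expansion of coefficients showing the product swallows $V$ (your explicit normal form $(W_1K)(W_2K)=(W_1+W_2^K)K$ is just a slightly more structured packaging of the paper's ``pick $1$ or $2^jv_i$ from each factor'' step).
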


\begin{proof}
1. Suppose that $G=K_{1}\cdots K_{h}$ where the $K_{i}$ are pairwise
conjugated subgroups of $G$. Then $p^{n}k=\left\vert G\right\vert
\leq\left\vert K_{1}\right\vert \cdots\left\vert K_{h}\right\vert =k^{h}$ and
the lower bound follows by taking logarithms.

2. Set $m:=\left\lceil \log_{2}p\right\rceil $. Let $\left\{  v_{1}%
,...,v_{n}\right\}  $ be a basis of the vector space $V$. If $i\in\left\{
1,...,n\right\}  $ and $s\in\left\{  1,...,p-1\right\}  $ then $s$ is a sum of
at most $m$ distinct powers of $2$ and hence $sv_{i}$ is a sum of at most $m$
vectors of the form $2^{j}v_{i}$ with $0\leq j\leq m-1$ (just write $s$ in
base $2$). By Lemma \ref{Lem_vinKK^t}, for every $i\in\left\{
1,...,n\right\}  $ and $j\in\left\{  0,...,m-1\right\}  $ there exist
$t_{ij}\in V$ such that $2^{j}v_{i}\in KK^{t_{ij}}$. Hence the product $%
{\textstyle\prod\limits_{j=0}^{m-1}}
KK^{t_{ij}}$ contains all elements of $V$ of the form $sv_{i}$, where $sv_{i}%
$, $0\leq s\leq p-1$, is written in multiplicative notation: For each
$j\in\left\{  0,...,m-1\right\}  $ we either pick $1$ from $KK^{t_{ij}}$ if
the $j$th bit of $s$ is zero or $2^{j}v_{i}$ if the $j$th bit of $s$ is $1$.
Hence:%
\[%
{\textstyle\prod\limits_{i=1}^{n}}
{\textstyle\prod\limits_{j=0}^{m-1}}
KK^{t_{ij}}=K%
{\textstyle\prod\limits_{i=1}^{n}}
{\textstyle\prod\limits_{j=0}^{m-1}}
KK^{t_{ij}}\supseteq K%
{\textstyle\prod\limits_{i=1}^{n}}
\left\langle v_{i}\right\rangle =KV=G\text{.}%
\]
This proves that $\gamma_{\text{cp}}\left(  G\right)  \leq2nm=2n\left(
\left\lceil \log_{2}p\right\rceil \right)  \leq2n\left(  \log_{2}p+1\right)  $.
\end{proof}

\begin{proof}
[Proof of Theorem \ref{Th_SolvableCaseUpperBound}]We can assume that $G$ is a
qmnn-group. Then, by Theorem \ref{Th_SolvableBounds} we have $\gamma
_{\text{cp}}\left(  G\right)  \leq2n\left(  \log_{2}p+1\right)  $. Since
$\left\vert G\right\vert =p^{n}k$, $4\log_{2}\left\vert G\right\vert
=4n\log_{2}p+4\log_{2}k\geq2n\left(  \log_{2}p+1\right)  $ and the claim follows.
\end{proof}

For the family of groups in the next example there is a true gap between the
lower and the upper bounds of Theorem \ref{Th_SolvableBounds}, and this may be
taken as a hint that a tighter upper bound exists.

\begin{example}
\label{Example_AGL1(F_p)}Assuming the minimal solvable setting take $n=1$ and
$p>2$, which gives $V\cong C_{p}$. Choose $K=Aut\left(  V\right)  \cong
C_{p-1}$. Then $G\cong$ $AGL_{1}\left(  F_{p}\right)  $ which acts
$2$-transitively on $F_{p}$ (See for instance \cite{DixonUndMortimer} Exercise
2.8.1 p.52). Hence $\gamma_{\text{cp}}\left(  G\right)  =3$ and the lower
bound is also $\left\lceil n\frac{\log_{2}p}{\log_{2}k}+1\right\rceil
=\left\lceil \frac{\log_{2}p}{\log_{2}\left(  p-1\right)  }+1\right\rceil =3$.
\end{example}

\subsection{Proof of Theorem \ref{Th_gamma=kForAllk>=3}}

\begin{proposition}
\label{Prop_D_2p}Let $p$ be an odd prime and let $G=D_{2p}$, the dihedral
group of order $2p$. Then $\gamma_{\text{cp}}\left(  G\right)  =\left\lceil
\log_{2}p\right\rceil +1$.
\end{proposition}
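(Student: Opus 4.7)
\bigskip
\noindent\textbf{Proof plan for Proposition \ref{Prop_D_2p}.}

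The dihedral group $G=D_{2p}=\langle r,s\mid r^{p}=s^{2}=1,\,srs=r^{-1}\rangle$ fits the minimal solvable setting with $V=\langle r\rangle\cong C_{p}$ (the unique minimal normal subgroup) and $K=\langle s\rangle\cong C_{2}$, so $n=1$ and $|K|=2$. Every non-normal maximal subgroup of $G$ is a conjugate $\langle r^{a}s\rangle$ of $K$, so $\gamma_{\text{cp}}(G)$ equals the smallest $h$ for which $G$ can be written as a product of $h$ such order-two conjugates. The plan is to establish both matching bounds.

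\emph{Lower bound.} I would apply Theorem \ref{Th_SolvableBounds} directly (or equivalently count: a product of $h$ sets of size $2$ has at most $2^{h}$ elements, whereas $|G|=2p$). This gives $\gamma_{\text{cp}}(G)\geq\log_{2}p+1$; since $p$ is an odd prime, $\log_{2}p$ is irrational, so $\lceil\log_{2}p+1\rceil=\lceil\log_{2}p\rceil+1$.

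\emph{Upper bound, explicit construction.} Set $m=\lceil\log_{2}p\rceil$ and choose $a_{1}=0$ and $a_{j}=2^{j-1}-1$ for $j=2,\ldots,m+1$; let $K_{j}=\langle r^{a_{j}}s\rangle$. Writing every product of conjugates of $K$ as $\{r^{a}:a\in R\}\cup\{r^{a}s:a\in S\}$ for sets $R,S\subseteq\mathbb{Z}/p\mathbb{Z}$, a short calculation using $(r^{a})(r^{a_{j}}s)=r^{a+a_{j}}s$ and $(r^{a}s)(r^{a_{j}}s)=r^{a-a_{j}}$ yields the recursion
\[
R_{j}=R_{j-1}\cup(S_{j-1}-a_{j}),\qquad S_{j}=S_{j-1}\cup(R_{j-1}+a_{j}),
\]
starting from $R_{1}=S_{1}=\{0\}$. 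I would then prove by induction on $j$ that
\[
R_{j}\supseteq\{-n\bmod p:0\leq n<2^{j-1}\},\qquad S_{j}\supseteq\{n\bmod p:0\leq n<2^{j-1}\}.
\]
The inductive step uses the telescoping identity $S_{j-1}-a_{j}=\{-(2^{j-1}-1),-(2^{j-1}-2),\ldots,-2^{j-2}\}$, which is precisely the next block of integers beyond $\{0,-1,\ldots,-(2^{j-2}-1)\}$, so the union fills out $\{0,-1,\ldots,-(2^{j-1}-1)\}$; the $S_{j}$ calculation is symmetric. At $j=m+1$ we have $2^{m}\geq p$, so $R_{m+1}=S_{m+1}=\mathbb{Z}/p\mathbb{Z}$ and therefore $K_{1}\cdots K_{m+1}=G$, giving $\gamma_{\text{cp}}(G)\leq m+1=\lceil\log_{2}p\rceil+1$.

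\emph{Main obstacle.} The essential point is to locate the right doubling sequence $(a_{j})$: each new conjugate should roughly double the running product set without collisions in $\mathbb{Z}/p\mathbb{Z}$ until the size reaches $p$. The choice $a_{j}=2^{j-1}-1$ is the one that makes the shifted copy $S_{j-1}-a_{j}$ sit exactly adjacent to $R_{j-1}$ as an integer interval; showing that no collisions mod $p$ occur until the final step amounts to observing that while we are still below size $p$ the integer intervals have length at most $p$, so they inject into $\mathbb{Z}/p\mathbb{Z}$. Combining the two bounds gives $\gamma_{\text{cp}}(D_{2p})=\lceil\log_{2}p\rceil+1$.
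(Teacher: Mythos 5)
Your proposal is correct, and while the lower bound is handled exactly as in the paper (via Theorem \ref{Th_SolvableBounds}, i.e.\ the order count $2^{h}\geq 2p$), your upper bound construction is genuinely different. The paper conjugates $\langle b\rangle$ by the elements $v^{2^{j-1}}$, so that pairwise products of the factors produce the elements $v^{-2x}$ with $x$ an alternating sum of distinct powers of $2$; it then needs the separate combinatorial Lemma \ref{Lem_X_n}, whose proof requires an injectivity argument, to see that such alternating sums cover $2^{m}-1$ consecutive nonzero residues and hence all of $\mathbb{Z}/p\mathbb{Z}$. You instead conjugate by $r^{2^{j-1}-1}$, chosen precisely so that at each step the translate $S_{j-1}-a_{j}$ of the current interval of exponents sits flush against $R_{j-1}$, and a one-line induction doubles the interval until it has length $2^{m}\geq p$. (Two trivial points: your displayed recursion is really a containment, since $R_{j-1},S_{j-1}$ may contain more than the stated intervals, but only containment is used; and the remark about irrationality of $\log_{2}p$ is unnecessary, as $\lceil x+1\rceil=\lceil x\rceil+1$ holds for every real $x$.) Your route is more elementary and self-contained --- it eliminates the need for Lemma \ref{Lem_X_n} entirely --- whereas the paper's signed-binary-representation lemma is a reusable combinatorial statement in its own right and is the form of the argument the authors indicate generalizes to subgroups of $AGL_{1}(F_{p^{n}})$.
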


For proving Proposition \ref{Prop_D_2p} we need the following lemma.

\begin{lemma}
\label{Lem_X_n}Let $n\geq1$ be an integer. Set
\[
X_{n}:=\left\{
{\textstyle\sum\limits_{i=0}^{h}}
\left(  -1\right)  ^{i}2^{a_{i}}|0\leq h\leq n-1,a_{0}<...<a_{h}\leq
n-1,a_{i}\in\mathbb{N}_{0}\right\}  \text{.}%
\]
Then $X_{n}=\left\{  x\in\mathbb{Z}|-2^{n-1}+1\leq x\leq2^{n-1}\right\}
-\left\{  0\right\}  $, and:%
\[
\left\{  1,...,k\right\}  \subseteq X_{n}\operatorname{mod}\left(  k+1\right)
:=\left\{  x\operatorname{mod}\left(  k+1\right)  |x\in X_{n}\right\}  \text{,
}\forall1\leq k<2^{n}\text{.}%
\]

\end{lemma}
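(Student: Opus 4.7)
My plan is to prove the explicit description of $X_{n}$ first; the modular claim will then follow almost immediately. For a nonempty subset $A=\{a_{0}<\cdots<a_{h}\}\subseteq\{0,\ldots,n-1\}$, I write $S(A)=\sum_{i=0}^{h}(-1)^{i}2^{a_{i}}$ and record the recursion $S(A)=(-1)^{h}2^{a_{h}}+S(A\setminus\{a_{h}\})$ obtained by peeling off the largest index (with the convention $S(\emptyset):=0$).

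To prove $X_{n}\subseteq\{-2^{n-1}+1,\ldots,2^{n-1}\}\setminus\{0\}$, I would induct on $h$ to establish the two-sided estimate: for $h$ even, $1\leq S(A)\leq 2^{a_{h}}$, and for $h$ odd, $-2^{a_{h}}+1\leq S(A)\leq -1$. The inductive step combines the recursion above with the inductive hypothesis applied to $A\setminus\{a_{h}\}$, using only the bound $2^{a_{h-1}}\leq 2^{a_{h}-1}$. Since $a_{h}\leq n-1$, the claimed inclusion follows at once.

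For the reverse inclusion I would use a counting argument: the target set has exactly $2^{n}-1$ elements, and so does the collection of nonempty subsets of $\{0,\ldots,n-1\}$, so it suffices to show that $A\mapsto S(A)$ is injective. The crux is the observation that the $2$-adic valuation of $S(A)$ equals $\min A$: writing $S(A)=2^{a_{0}}\bigl(1+2^{a_{1}-a_{0}}\cdot (\text{integer})\bigr)$ exhibits an odd cofactor. If $S(A)=S(B)$, this forces $\min A=\min B=:c$, and rewriting the recursion as $S(A\setminus\{c\})=2^{c}-S(A)=2^{c}-S(B)=S(B\setminus\{c\})$ reduces to a strictly smaller instance; induction on $|A|+|B|$ (the base case, where one of the sets becomes empty, is handled because the previous step showed $S$ is nonzero on every nonempty subset) yields $A=B$. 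I expect this injectivity step to be the main obstacle; the $2$-adic valuation observation is what makes it clean.

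The modular statement then drops out by a case split. Given $1\leq k<2^{n}$ and $r\in\{1,\ldots,k\}$, if $r\leq 2^{n-1}$ then $r$ itself lies in $X_{n}$ and represents its own residue modulo $k+1$. Otherwise $2^{n-1}<r\leq k\leq 2^{n}-1$, so $r-(k+1)$ is a negative integer with $r-(k+1)\geq (2^{n-1}+1)-2^{n}=-2^{n-1}+1$; hence $r-(k+1)\in X_{n}$ by the first part and is congruent to $r$ modulo $k+1$.
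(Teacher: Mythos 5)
Your proposal is correct and follows essentially the same route as the paper: both establish $|X_{n}|=2^{n}-1$ by proving injectivity of $A\mapsto S(A)$ via the observation that the $2$-adic valuation of $S(A)$ is $\min A$, then conclude by counting against the target set, and both handle the modular claim by the identical case split on whether $r\leq 2^{n-1}$. Your two-sided induction on $h$ giving $1\leq S(A)\leq 2^{a_{h}}$ ($h$ even) and $-2^{a_{h}}+1\leq S(A)\leq -1$ ($h$ odd) is in fact slightly more careful than the paper, which only exhibits elements attaining the extremes of the range.
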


\begin{proof}
Set $Y_{n}:=\left\{  \left(  a_{0},...,a_{h}\right)  |0\leq h\leq
n-1,a_{0}<...<a_{h}\leq n-1,a_{i}\in\mathbb{N}_{0}\right\}  $. There is a
bijection between $Y_{n}$ and the set of non-empty subsets of $\left\{
0,...,n-1\right\}  $, and hence $\left\vert Y_{n}\right\vert =2^{n}-1$. We
prove by induction on $n\geq1$ that the natural mapping $Y_{n}\rightarrow
X_{n}$ is injective (it is clearly surjective). For $n=1$ there is nothing to
prove. Let $n>1$ and let $\left(  a_{0},a_{1},...,a_{h}\right)  \neq\left(
b_{0},b_{1},...,b_{h^{\prime}}\right)  $ be two elements of $Y_{n}$. Assume by
contradiction that
\[%
{\textstyle\sum\limits_{i=0}^{h}}
\left(  -1\right)  ^{i}2^{a_{i}}=%
{\textstyle\sum\limits_{i=0}^{h^{\prime}}}
\left(  -1\right)  ^{i}2^{b_{i}}\text{.}%
\]
If $a_{0}=b_{0}$ then $%
{\textstyle\sum\limits_{i=1}^{h}}
\left(  -1\right)  ^{i}2^{a_{i}}=%
{\textstyle\sum\limits_{i=1}^{h^{\prime}}}
\left(  -1\right)  ^{i}2^{b_{i}}$ and after canceling a common factor of $2$
on both sides we can apply the induction assumption and obtain a
contradiction. If $a_{0}>b_{0}$, then the left hand side is divisible by
$2^{b_{0}+1}$ while the right hand side is not - a contradiction. The case
$a_{0}<b_{0}$ is handled similarly. Thus $\left\vert X_{n}\right\vert =$
$2^{n}-1$. In order to complete the proof of the first claim of the lemma it
remains to check that $\min\left(  X_{n}\right)  =$ $-2^{n-1}+1$ (take $h=1,$
$a_{0}=1$ and $a_{1}=n-1$), that $\max\left(  X_{n}\right)  =2^{n-1}$ (take
$h=0,$ $a_{0}=n-1$), and that $0\notin X_{n}$ (Supposing $%
{\textstyle\sum\limits_{i=0}^{h}}
\left(  -1\right)  ^{i}2^{a_{i}}=0$, where $0\leq a_{0}<...<a_{h}$,
contradicts $%
{\textstyle\sum\limits_{i=0}^{h}}
\left(  -1\right)  ^{i}2^{a_{i}}\equiv2^{a_{0}}\left(  \operatorname{mod}%
2^{a_{0}+1}\right)  $). The second claim of the lemma is immediate if
$k\leq2^{n-1}$. If $2^{n-1}<k<2^{n}$ then any $x\in\left\{  2^{n-1}%
+1,...,k\right\}  $ is congruent, modulo $\left(  k+1\right)  $, to a number
in $\left\{  -2^{n-1}+1,-2^{n-1}+2,...,-1\right\}  $.
\end{proof}

\begin{proof}
[Proof of Proposition \ref{Prop_D_2p}]We use the familiar presentation of
dihedral groups, $D_{2p}=\left\langle v,b|v^{p}=b^{2}=1,bvb=v^{-1}%
\right\rangle $. Note that $G$ fits the minimal solvable setting with
$V=\left\langle v\right\rangle $ and $K=\left\langle b\right\rangle $. Set
$m:=\left\lceil \log_{2}p\right\rceil $. By Theorem \ref{Th_SolvableBounds}
$\gamma_{\text{cp}}\left(  G\right)  \geq m+1$ so it remains to prove
$\gamma_{\text{cp}}\left(  G\right)  \leq m+1$. For each $1\leq j\leq m$ set
$v_{j}:=v^{2^{j-1}}$, and $B:=\left\langle b\right\rangle ^{v_{1}}%
\cdots\left\langle b\right\rangle ^{v_{m}}\left\langle b\right\rangle $. We
shall prove that $V\subseteq B$ and then, by Lemma \ref{Lem_HA_1...A_k}, $B=G$
and $\gamma_{\text{cp}}\left(  G\right)  \leq m+1$ follows.

Observe that for any $1\leq i\leq m$ the defining relations of the $D_{2p}$
presentation imply $v_{i}^{-1}bv_{i}=$ $v_{i}^{-2}b$. Consequently $b^{v_{i}%
}b^{v_{j}}=v_{i}^{-2}v_{j}^{2}=\left(  v_{i}^{-1}v_{j}\right)  ^{2}$ for all
$1\leq i\neq j\leq m$. Thus, for any $1\leq t\leq m$, and $1\leq i_{1}%
<i_{2}<...<i_{t}\leq m$ we have
\begin{align*}
\left(  v_{i_{1}}^{-1}v_{i_{2}}v_{i_{3}}^{-1}v_{i_{4}}\cdots v_{i_{t-1}}%
^{-1}v_{i_{t}}\right)  ^{2}  &  \in\left\langle b\right\rangle ^{v_{1}}%
\cdots\left\langle b\right\rangle ^{v_{m}}\subseteq B\text{, if }t\text{ is
even}\\
\left(  v_{i_{1}}^{-1}v_{i_{2}}v_{i_{3}}^{-1}v_{i_{4}}\cdots v_{i_{t-2}}%
^{-1}v_{i_{t-1}}v_{i_{t}}^{-1}\right)  ^{2}  &  \in\left\langle b\right\rangle
^{v_{1}}\cdots\left\langle b\right\rangle ^{v_{m}}b\subseteq B\text{, if
}t\text{ is odd.}%
\end{align*}
Substituting $v_{j}:=v^{2^{j-1}}$, $1\leq j\leq m$, we conclude that every
element of $V$ of the form $v^{-2x}=\left(  v^{-2}\right)  ^{x}$ where $x\in
X_{m}$ ($X_{m}$ is defined in Lemma \ref{Lem_X_n}) is in $B$. Note that since
$p$ is odd $v^{-2}$ is a generator of $V$. By definition of $m$ we have
$p-1<2^{m}$, and hence, by Lemma \ref{Lem_X_n}, $\left\{  1,...,p-1\right\}
\subseteq X_{m}\operatorname{mod}p$. Since $v^{0}=1_{G}\in B$ as well, we get
$V\subseteq B$.
\end{proof}

\begin{proof}
[Proof of Theorem \ref{Th_gamma=kForAllk>=3}]By Bertrand's postulate, for
every integer $n\geq3$ there exists at least one odd prime $p$ such that
$2^{n-2}<p<2^{n-1}$. Hence $\left\lceil \log_{2}p\right\rceil =n-1$, and, by
Proposition \ref{Prop_D_2p}, $\gamma_{\text{cp}}\left(  D_{2p}\right)
=\left\lceil \log_{2}p\right\rceil +1=n$.
\end{proof}

\begin{remark}
Here are two additional relevant results, stated without proofs.

1. For a general dihedral group $D_{2n}$ with $n\geq2$ an arbitrary integer,
$\gamma_{\text{cp}}\left(  D_{2n}\right)  =\infty$ if $n$ is a power of $2$
and otherwise $\gamma_{\text{cp}}\left(  D_{2n}\right)  =\left\lceil \log
_{2}p\right\rceil +1$, where $p$ is the smallest odd prime divisor of $n$.

2. One can generalize the ideas behind the proof of Proposition
\ref{Prop_D_2p}, for the case that $G$ is a subgroup of $AGL_{1}\left(
F_{p^{n}}\right)  $, $p$ is an odd prime and $n$ is a positive integer, with
$V\cong\left(  F_{p^{n}},+,0\right)  $ and $K$ acts irreducibly by
multiplication on $V$ as a subgroup of $\left(  F_{p^{n}}^{\ast}%
,\cdot,1\right)  $. In particular, for $p=13$, $n=1$ and $K$ the order $4$
subgroup of $F_{13}^{\ast}$, one obtains $\gamma_{\text{cp}}\left(  G\right)
=4$. Note that the lower bound of Theorem \ref{Th_SolvableBounds} for this
case is $3$ (compare to Example \ref{Example_AGL1(F_p)}).
\end{remark}

\noindent{\huge Appendix} \label{appendix}

Table \ref{TblRanks} below presents a choice of $U\leq X$ for each almost
simple sporadic group $X$, such that $U$ satisfies the conditions of Lemma
\ref{Lem_LiftingToNormalizers} with as minimal as possible value of $r$. The
values of $h=r+1$ given here provide upper bounds on $\gamma_{\text{cp}%
}\left(  G\right)  $ in the proof of Corollary
\ref{Coro_X/TForSporadicAndTits}. The table is based on two sources:

1. Breuer and Lux (\cite{BreuerLux},\cite{Breuer}) have computed all
multiplicity free permutation characters of almost simple sporadic groups.
Note that $Aut\left(  O^{\prime}N\right)  $ has no suitable multiplicity free
permutation characters.

2. Using GAP's character library (\cite{GAP},\cite{GAPCTblLib1.2.1}) we have
been able to compute the permutation characters associated with maximal
subgroups of all almost simple sporadic groups beside the groups $B$ and $M$,
and $^{2}F_{4}\left(  2\right)  ^{\prime}$. In particular one can estimate $h$
for $Aut\left(  O^{\prime}N\right)  $ in this way. ~

\begin{center}
\bigskip%
\begin{table}[t] \centering
\begin{tabular}
[c]{|c|c|c||c|c|c||c|c|c|}\hline
$X$ & $U$ & $h$ & $X$ & $U$ & $h$ & $X$ & $U$ & $h$\\\hline
$M_{11}$ & $A_{6}.2_{3}$ & $3$ & $M_{24}$ & $M_{23}$ & $3$ & $HN.2$ & $4.HS.2$
& $10$\\\hline
$M_{12}$ & $M_{11}$ & $3$ & $M^{c}L$ & $U_{4}\left(  3\right)  $ & $4$ & $Ly$
& $G_{2}\left(  5\right)  $ & $6$\\\hline
$M_{12}.2$ & $L_{2}(11).2$ & $6$ & $M^{c}L.2$ & $U_{4}\left(  3\right)
.2_{3}$ & $4$ & $Th$ & $^{3}D_{4}(2).3$ & $12$\\\hline
$J_{1}$ & $L_{2}(11)$ & $6$ & $He$ & $S_{4}(4).2$ & $6$ & $Fi_{23}$ &
$2.Fi_{22}$ & $4$\\\hline
$M_{22}$ & $L_{3}(4)$ & $3$ & $He.2$ & $S_{4}(4).4$ & $6$ & $Co_{1}$ &
$Co_{2}$ & $5$\\\hline
$M_{22}.2$ & $L_{3}(4).2_{2}$ & $3$ & $Ru$ & ${}^{2}F_{4}\left(  2\right)
^{\prime}.2$ & $4$ & $J_{4}$ & $2^{11}:M_{24}$ & $8$\\\hline
$J_{2}$ & $U_{3}(3)$ & $4$ & $Suz$ & $G_{2}(4)$ & $4$ & $Fi_{24}^{\prime}$ &
$Fi_{23}$ & $4$\\\hline
$J_{2}.2$ & $U_{3}(3).2$ & $4$ & $Suz.2$ & $G_{2}(4).2$ & $4$ & $Fi_{24}%
^{\prime}.2$ & $Fi_{23}\times2$ & $4$\\\hline
$M_{23}$ & $M_{22}$ & $3$ & $O^{\prime}N$ & $L_{3}(7).2$ & $6$ & $B$ &
$2.^{2}E_{6}(2).2$ & $6$\\\hline
${}^{2}F_{4}\left(  2\right)  ^{\prime}$ & $L_{3}(3).2$ & $5$ & $O^{\prime
}N.2$ & $J_{1}\times2$ & $36$ & $M$ & $2.B$ & $10$\\\hline
${}^{2}F_{4}\left(  2\right)  ^{\prime}.2$ & $2.[2^{9}]:5:4$ & $6$ & $Co_{3}$
& $M^{c}L.2$ & $3$ &  &  & \\\hline
$HS$ & $U_{3}\left(  5\right)  .2$ & $3$ & $Co_{2}$ & $U_{6}(2).2$ & $4$ &  &
& \\\hline
$HS.2$ & $M_{22}.2$ & $4$ & $Fi_{22}$ & $2.U_{6}(2)$ & $4$ &  &  & \\\hline
$J_{3}$ & $L_{2}(16).2$ & $9$ & $Fi_{22}.2$ & $2.U_{6}(2).2$ & $4$ &  &  &
\\\hline
$J_{3}.2$ & $L_{2}(16).4$ & $9$ & $HN$ & $2.HS.2$ & $10$ &  &  & \\\hline
\end{tabular}
\caption{Subgroups of Almost Simple Sporadic Groups which provide the best upper bounds on $\gamma_{\text{cp}}$ via the rank argument}\label{TblRanks}%
\end{table}%

\bigskip

\bigskip

\bigskip

\bigskip

\bigskip

\bigskip

\bigskip

\bigskip
\end{center}

\bigskip

\bigskip

\end{document}